\def\rtt{\operatorname\rightthreetimes}
\def\rrt#1#2#3#4#5#6#7{\xymatrix{ {#1} \ar[r]^{} \ar@{->}[d]_{#2} & {#4} \ar[d]^{#5} \\ {#3}  \ar[r] \ar@{-->}[ur]^{#7}& {#6} }}
\def\lra{\longrightarrow}
\def\lr{{\rtt lr}}
\def\lrl{{\rtt l}}
\def\rl{{\rtt rl}}
\def\rlr{{\rtt r}}
\def\barv{\overline{v}}\def\barw{\overline{w}}
\def\lra{\longrightarrow}
\def\ra{\searrow}
\def\la{\swarrow}
\def\llrra{\leftrightarrow}
\def\xra{\xrightarrow}
\title[extremally disconnected as 
$\left\{\{u\rotatebox{-12}{\ensuremath{\to}}\raisebox{-2pt}{\ensuremath{a\,\,,\hskip-6pt\raisebox{3pt}{\bf\color{red}=}b}\rotatebox{13}{\ensuremath{\leftarrow}}} u\}\right\}^l$,
and proper as $\left(\left\{\{o\rotatebox{-12}{\bf\color{red}\ensuremath{\mathbf{\to}}}\raisebox{-2pt}{\color{red}\ensuremath{\mathbf{c}}}\}\right\}^r_{<4}\right)^{lr}$]
{Extremally disconnected spaces as $\{\{u\rotatebox{-12}{\ensuremath{\to}}\raisebox{-2pt}{\ensuremath{a,b}\rotatebox{13}{\ensuremath{\leftarrow}}} u\}\lra \{u\rotatebox{-12}{\ensuremath{\to}}\raisebox{-2pt}{\ensuremath{a=b}\rotatebox{12}{\ensuremath{\leftarrow}}}v\}\}^l$,
and being proper as $(\{\{o\}\lra\{o\to c\}\}^r_{<4})^{lr}$}
\author{masha gavrilovich}
\thanks{\tiny Die Mathematiker sind eine Art Franzosen: Redet man zu ihnen, so \"ubersetzen sie es in ihre Sprache, und dann ist es alsobald ganz etwas anderes.---Johann Wolfgang von Goethe. Maximen und Reflexionen. 
Aphorismen und Aufzeichnungen. Nach den Handschriften des Goethe- und Schiller-Archivs hg. von Max Hecker, Verlag der Goethe-Gesellschaft, 
Weimar 1907, 
Aus dem Nachlass, Nr.~1005, Uber Natur und Naturwissenschaft.}
\address{Institute for Regional Economics Studies//IRESRAS}
\email{mishap@s\!\!\!sdf.org}
\urladdr{http://mishap.sdf.org/yetanothernotanobfuscatedsyntax.pdf}
\begin{document}
\begin{abstract}
We observe that the notions of a topological space being  extremally disconnected,
and of a continuous map of compact Hausdorff spaces being proper, 
can each be defined in terms of the Quillen lifting property  with respect to 
a surjective proper morphism of finite topological spaces, i.e.~in terms of a monotone map of finite preorders. 
This reveals the preorders
implicit in the statement of the Gleason theorem that 
extremally disconnected spaces are projective in the category of 
compact Hausdorff topological spaces, and interprets it 
as an instance of a weak factorisation system generated by an explicitly given morphism.
\end{abstract}
\maketitle
\enlargethispage{6\baselineskip}
\section{Introduction}
We observe that the notions of a topological space being  extremally disconnected, having closed points,  
and a continuous map of compact Hausdorff spaces being proper,\newline\begin{minipage}{0.7\textwidth} 
and being surjective proper, can each be defined in terms of the Quillen lifting property  with respect to 
a surjective proper morphism of finite topological spaces, i.e.~in terms of a monotone map of finite preorders (see Fig.~1). 
Based on this, we introduce a concise, 
and, in a sense intuitive, combinatorial notation expressing these notions via simplest (counter)examples,
and often closely following the standard definitions. We hope our results suggest that
this notation can be used to formalise these properties. 
\end{minipage}\begin{minipage}{0.3\textwidth}
\includegraphics[width=0.9\textwidth,angle=0,origin=c]{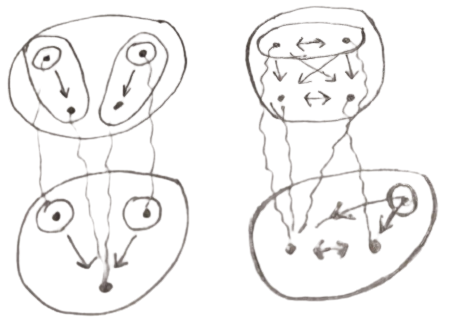}
\tiny	\\.  \ \ \ \ \ \ \ \ (a) \ \ \ \ \ \ \ \ \ \ \ \ \ \ (b)
        \\.  \  Fig.1. Maps of preorders for
	\\.\ \ \ \ (a) extr.disconnected\\.\ \ \ \ (b) proper
\end{minipage}

This allows us to write a couple of facts in general topology 
mentioned in the lecture notes on condensed mathematics by P.Scholze
in a category-theoretic language hopefully closer to the spirit of the notes. 
The theorem of [Gleason], cf.~[Condensed,Def.2.4], 
that extremally disconnected spaces are projective in the category of compact Hausdorff spaces,
can then be seen as saying that there is a weak factorisation system generated by a certain 
surjective proper morphism of finite topological spaces. Our reformulation allows us 
to see that it is important for this theorem that the map of preorders 
implicit in the definition of extremally disconnected, is both surjective and proper.
The fact that a surjective proper map is necessarily a quotient in our notation is expressed as 
an inclusion of orthogonals of morphisms of finite spaces: 
$$\hskip-42pt
\left\{\{u\rotatebox{-12}{\ensuremath{\to}}\raisebox{-2pt}{\ensuremath{a,b}\rotatebox{13}{\ensuremath{\leftarrow}}} v\}\lra \{u\rotatebox{-12}{\ensuremath{\to}}\raisebox{-2pt}{\ensuremath{a=b}\rotatebox{12}{\ensuremath{\leftarrow}}}v\},
\{a\llrra b\rotatebox{-12}{\ensuremath{\to}}\raisebox{-2pt}{\ensuremath{c\llrra d}}\}\lra 
\{a\llrra b=c=d\}\right\}^\lr \subset 
\left\{\{o\rotatebox{-12}{\ensuremath{\to}}\raisebox{-2pt}{\ensuremath{c}}\}\lra \{o\llrra c\}\right\}^\lrl$$
This fact is "the key point" [Analytic, p.7] in the sheaf condition holding for condensed sets represented by topological spaces.  

Lemma~\ref{lem:lrlp} and Lemma~\ref{lem:surjpr} state our reformulations; \S2 introduces necessary notation
and reformulations in terms of lifting properties.
Appendix A gives a list of reformulations of elementary notions in general topology in terms of lifting properties. 
In an unfinished appendix B we attempt to present a diagram chasing rendering of the proof of 
Lemma 1.3 [Analytic] stating that product commutes with filtered colimits in the category of compact Hausdorff spaces
whenever all maps are (closed) inclusions, to help the reader ponder whether our notation can be used in an efficient
theorem prover for elementary topology.

\subsection{Further questions} Our reformulations reveal combinatorics 
of finite preorders implicit, perhaps surprisingly, 
in many standard definitions in elementary general topology. 
It may be interesting to understand this combinatorics or make use of it.

How much of elementary topology could be developed 
entirely combinatorially? Say, could one prove the Gleason theorem 
entirely by a combinatorial diagram chasing calculation ?  
Or the combinatorial expression above representing the fact that 
surjective proper maps are necessarily quotient.

Could these combinatorial expressions, say for quotients or properness, be interpreted 
in larger categories of spaces, say of condensed sets or the category of simplicial objects
in the category of filters [situs]? 

Formalisation of condensed mathematics may perhaps need the notions of 
extremally disjoint spaces or proper maps. Could our reformulation suggest
an efficient theorem prover/tactic for elementary claims about these notions?   
Note that notions defined by lifting properties behave nicely with respect 
to limits and colimits.

\subsection{Explaining notation in the title} The notation and necessary definitions are 
introduced in the next section. Here we give a brief sketch assuming familiarity with lifting properties. 

\subsubsection{Extremally disconnected.}
For a class of morphisms $P$, let $P^\rlr$ and $P^\lrl$ denote 
the class of morphisms having the left, resp.~right, lifting property with respect to each morphism in $P$.  
Let $P^\rl:=(P^\rlr)^\lrl$, $P^\lr=(P^\lrl)^\rlr$. Recall a topology on a finite set may be regarded as a preorder
or, equivalently, as a category with unique morphisms:
$x\leqslant y$, resp.~$x\searrow y$, iff $y$ lies in the closure of $x$.

Thus $\{u\rotatebox{-12}{\ensuremath{\to}}\raisebox{-2pt}{\ensuremath{a,b}\rotatebox{13}{\ensuremath{\leftarrow}}} v\}$
denotes the topological space with two open points $u$ and $v$, two closed points $a$ and $b$, 
split into two connected components 
$\{u\rotatebox{-12}{\ensuremath{\to}}\raisebox{-2pt}{\ensuremath{a}}\}$ 
and $\{b\rotatebox{13}{\ensuremath{\leftarrow}} v\}$.
The expression 
$\{u\rotatebox{-12}{\ensuremath{\to}}\raisebox{-2pt}{\ensuremath{a,b}\rotatebox{13}{\ensuremath{\leftarrow}}} v\}\lra \{u\rotatebox{-12}{\ensuremath{\to}}\raisebox{-2pt}{\ensuremath{a=b}\rotatebox{12}{\ensuremath{\leftarrow}}}v\}$ denotes the morphism 
gluing together points $a$ and $b$. In terms of categories, we think that this morphism is the functor 
``adding an identity morphism between objects $a$ and $b$''. Hence, 
$\{\{u\rotatebox{-12}{\ensuremath{\to}}\raisebox{-2pt}{\ensuremath{a,b}\rotatebox{13}{\ensuremath{\leftarrow}}} v\}\lra \{u\rotatebox{-12}{\ensuremath{\to}}\raisebox{-2pt}{\ensuremath{a=b}\rotatebox{12}{\ensuremath{\leftarrow}}}v\}\}^\lrl$
denotes the class of morphisms having the left lifting property with respect to $\{u\rotatebox{-12}{\ensuremath{\to}}\raisebox{-2pt}{\ensuremath{a,b}\rotatebox{13}{\ensuremath{\leftarrow}}} v\}\lra \{u\rotatebox{-12}{\ensuremath{\to}}\raisebox{-2pt}{\ensuremath{a=b}\rotatebox{12}{\ensuremath{\leftarrow}}}v\}$. To give a map to $\{u\rotatebox{-12}{\ensuremath{\to}}\raisebox{-2pt}{\ensuremath{a=b}\rotatebox{12}{\ensuremath{\leftarrow}}}v\}$ is the same as to give two disjoint open subsets of $E$ (the preimages of $u$ and $v$). 
It lifts to $\{u\rotatebox{-12}{\ensuremath{\to}}\raisebox{-2pt}{\ensuremath{a,b}\rotatebox{13}{\ensuremath{\leftarrow}}} v\}$ 
iff $U$ and $V$ lie in disjoint closed and open subsets. This is one of the equivalent definitions of $E$ being extremally disconnected. 

Note that the morphism 
$\{u\rotatebox{-12}{\ensuremath{\to}}\raisebox{-2pt}{\ensuremath{a,b}\rotatebox{13}{\ensuremath{\leftarrow}}} v\}\lra \{u\rotatebox{-12}{\ensuremath{\to}}\raisebox{-2pt}{\ensuremath{a=b}\rotatebox{12}{\ensuremath{\leftarrow}}}v\}$ is surjective, closed
and thereby proper, being a morphism of finite spaces. Also note that being surjective and proper
are right lifting properties, and that this means that each map in 
$\{\{u\rotatebox{-12}{\ensuremath{\to}}\raisebox{-2pt}{\ensuremath{a,b}\rotatebox{13}{\ensuremath{\leftarrow}}} v\}\lra \{u\rotatebox{-12}{\ensuremath{\to}}\raisebox{-2pt}{\ensuremath{a=b}\rotatebox{12}{\ensuremath{\leftarrow}}}v\}\}^\lr$
is surjective and proper. Hence, the existence of the weak factorisation system generated by this morphism
implies that each map $\emptyset \lra E$ decomposes as $\emptyset \lra X \in 
\left\{\{u\rotatebox{-12}{\ensuremath{\to}}\raisebox{-2pt}{\ensuremath{a,b}\rotatebox{13}{\ensuremath{\leftarrow}}} v\}\lra \{u\rotatebox{-12}{\ensuremath{\to}}\raisebox{-2pt}{\ensuremath{a=b}\rotatebox{12}{\ensuremath{\leftarrow}}}v\}\right\}^\lrl$
and a surjective proper map $E\lra X \in 
\left\{\{u\rotatebox{-12}{\ensuremath{\to}}\raisebox{-2pt}{\ensuremath{a,b}\rotatebox{13}{\ensuremath{\leftarrow}}} v\}\lra \{u\rotatebox{-12}{\ensuremath{\to}}\raisebox{-2pt}{\ensuremath{a=b}\rotatebox{12}{\ensuremath{\leftarrow}}}v\}\right\}^\lr$. 
That is, each space admits a surjection from an extremally disconnected space,
and a compact space admits a surjection from a compact extremally disconnected space.
A similar argument shows that the extremally disconnected space can also be assumed Hausdorff,
based on a reformulation of Axiom T1 (having closed points) as 
 by a right lifting property, namely with respect to the morphism $\{o\ra c\}\lra \{o=c\}$
gluing the Sierpinski space into a single point.

\subsubsection{Proper.} The expression 
$\{o\}\lra\{o\to c\}$ denotes perhaps the simplest example of a non-proper (actually, not closed) map: 
the map sending a point into the open point of the Sierpinski space.
It is easy to check that a map of finite spaces is in $\{\{o\}\lra\{o\to c\}\}^r$, i.e.~has 
the right lifting property with respect to this map, iff it is closed.
By $\{\{o\}\lra\{o\to c\}\}^r_{<4}$ we denote the subclass of $\{\{o\}\lra\{o\to c\}\}^r$ consisting of maps of spaces with 
less than 4 points.
By  [Bourbaki, General Topology, I\S10.2,Th.1(d)] (see Lemma~\ref{lem:surjpr}(1)) being proper is a left lifting property,
thus $lr$-orthogonal of any class of proper morphisms is a class of proper morphisms. 
Hence, each morphism in $(\{\{o\}\lra\{o\to c\}\}^r_{<4})^{lr}$ is proper, and by Engelking-Taimanov theorem 
(see Lemma~\ref{lem:surjpr}(3))) any morphism of compact Hausdorff spaces is in this class.

\section{Extremally disconnected sets being projective as a weak factorisation system}

\subsection{Preliminary lemmas} A number of definitions in general topology can be expressed by applying several times
the Quillen lifting property to a morphism of finite topological spaces, thereby leading to a concise notation 
based on finite preorders and their maps; Appendix~A gives a list of such reformulations. 
In this subsection we introduce notations and state facts 
necessarily to express in this way the definitions of  extremally disconnected and being proper.

\subsubsection{Quillen lifting property}
Recall that a morphism $i$ in a category has the {\em left lifting property} with respect to a morphism $p$, 
and $p$ also has the {\em right lifting property} with respect to $i$, 
iff for each $f:A\to X$ and $g:B\to Y$ such that $p\circ f = g \circ i$ there exists $h:B\to X$ such that $h\circ i = f$ and $p\circ h = g$.


For a class $P$ of morphisms in a category, its {\em left orthogonal} $P^{\rtt l}$ with respect to the lifting property, respectively its {\em right orthogonal} $P^{\rtt r}$, is the class of all morphisms which have the left, respectively right, lifting property with respect to each morphism in the class $P$. In notation,
$$
P^{\rtt l} := \{ i \,\,:\,\, \forall p\in P\,\, i\rtt p\},
P^{\rtt r} := \{ p \,\,:\,\, \forall i\in P\,\, i\rtt p\}, P^\lr:=(P^\lrl)^\rlr,..$$

Taking the orthogonal of a class $P$ is a simple way to define a class of morphisms excluding non-isomorphisms 
from $P$, in a way which is useful in a diagram chasing computation, and is often used to
define properties of morphisms starting from an explicitly given class of (counter)examples. 
  For this reason, 
it is convenient to refer to $P^\lrl$ and $P^\rlr$ as {\em left, resp.~right, Quillen negation} of property $P$. 

\subsubsection{Notation for finite topological spaces and their morphisms}

A topological space comes with a {\em specialisation preorder} on its points: for
points $x,y \in X$,  $x \leq y$ iff $y \in cl x$ ($y$ is in the closure of $x$), 
or equivalently. The resulting preordered set may be regarded as a category whose
objects are the points of ${X}$ and where there is a unique morphism $x{\searrow}y$ iff $y \in cl x$.

For a finite topological space $X$, the specialisation preorder or equivalently the corresponding category uniquely determines the space: a subset of ${X}$ is closed iff it is
downward closed, or equivalently, there are no morphisms going outside the subset.

The monotone maps (i.e.~functors) are the continuous maps for this topology.

We denote a finite topological space by a list of the arrows (morphisms) in
the corresponding category; arrows $\rightarrow$ and $\ra$ are interchangable 
and denote a morphism; '$\leftrightarrow $' denotes an isomorphism and '$=$' denotes the identity morphism.  An arrow between two such lists denotes a continuous map (a functor) which sends each point to the correspondingly labelled point, but possibly turning some morphisms into identity morphisms, thus gluing some points. This notation leads to a formal syntax defining morphisms of finite topological space, and, to emphasize this, we sometimes typeset these expressions as code: \verb|{a<->b}| for $\{a\llrra b\}$, and 
\verb|{a->b}| for $\{a\ra b\}$ etc. 

With this notation, we may display continuous functions for instance between the discrete space on two points, the Sierpinski space, the antidiscrete space and the point space as follows (where each point is understood to be mapped to the point of the same name in the next space):
$$  \begin{array}{ccccccc}
  \{a,b\}
     &\longrightarrow& 
  \{a{\searrow}b\}
     &\longrightarrow& 
  \{a\leftrightarrow b\}
    &\longrightarrow& 
  \{a=b\}
  \\ \verb|{a,b}|
     &\verb|-->|& 
  \verb|{a->b}|
     &\verb|-->|& 
  \verb|{a<->b}|
    &\verb|-->|& 
  \verb|{a=b}|
  \\
  \text{(discrete space)}
     &\longrightarrow& 
  \text{(Sierpinski space)}
    &\longrightarrow& 
  \text{(antidiscrete space)}
    &\longrightarrow& 
  \text{(single point)}
    \end{array}
$$
In $\{a{\searrow}b\}$, the point $a$ is open and point ${b}$ is closed.
\begin{rema} 
In $A \longrightarrow  B$, each object and each morphism in $A$ necessarily appears in $B$ as well.
We may extend the notation to avoid listing 
the same object or morphism twice, to make it more concise and easier to read, although 
at a cost of getting used to. For example, we may wish to shorten \verb|{u->a,b<-v}-->{u->a=b<-v}| to \verb|{u->a,b<-v}-->{a=b}|
or even $\{u \ra a\,\,,\hskip-6pt\raisebox{3pt}{\color{red}=}\, b\la v\}$ using red and placing symbols above each other 
to indicate morphisms and objects added.
Or perhaps to write 
$
\{a\} \longrightarrow  \{a,b\}
$ denoting the map from a single point to the discrete space with two points,
 as $\{a\} \longrightarrow  \{b\}
$ or $\{a{\color{red},b}\}$.  

Tricks like this can be useful in an actual implementation of this notation in a theorem prover.
\end{rema}

\subsubsection{Preliminary results}

We will use the following reformulations of properties of spaces and continuous maps in terms of lifting properties
with respect to morphisms of finite topological spaces. 
 
 Note that each notion is defined with help of a counterexample, often the simplest or archetypal one. 
A concise convenient way to express e.g.~items 3-5 is to say that 
{\em surjectivity, quotient, and injectivity are right Quillen negations of 
\verb${a}-->{a<->b}$, \verb${o->c}-->{o<->c}$, and \verb${a<->b}-->{a=b}$, resp.}

\begin{lemm}In the category of (all) topological spaces, the following holds.\begin{enumerate}
\item A map $X\longrightarrow Y$ is surjective iff \verb${}-->{o} /_ X-->Y$
\item Points are closed within each fibre of a map $X\lra Y$, i.e.~the fibres (as subspaces with induced topology)
satisfy separation axiom $T1$, 
iff $\verb|{o->c}-->{o=c} /_ X-->Y|$
\item A map $X\longrightarrow Y$ is surjective iff \verb$X-->Y /_ {a}-->{a<->b}$
\item A map $X\longrightarrow Y$ is a quotient iff \verb$X-->Y /_ {o->c}-->{o<->c}$
\item A map $X\longrightarrow Y$ is injective iff \verb$X-->Y /_ {a<->b}-->{a=b}$
\item A topological space $X$ is extremally disconnected iff 
\verb${}--> X /_ {u->a,b<-v}-->{u->a=b<-v}$
\item 
The topology on $X$ is induced from $Y$ along the map $X\lra Y$ 
iff  \verb$X-->Y /_  {o->c}-->{o=c}$
\item the map $X\lra Y$ has dense image iff 
\verb$X-->Y /_ {c}-->{o->c}$ 
\item 
The topology on $X$ is induced from $Y$ along the map $X\lra Y$ 
and this map is open 
iff  
\verb$X-->Y /_ {a<->b<-c}-->{a<->b=c}$
\item 
The topology on $X$ is induced from $Y$ along the map $X\lra Y$ 
and this map is closed 
iff  
\verb$X-->Y /_ {a<->b->c}-->{a<->b=c}$
%
\end{enumerate}
\label{lem:lrlp}\end{lemm}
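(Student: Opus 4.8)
To prove the last item~(10), the plan is to unwind the left lifting property of $f\colon X\to Y$ against the finite map $j\colon\{a\llrra b\to c\}\lra\{a\llrra b=c\}$ into an explicit condition on preimages, and then to recognise that condition as the conjunction of ``$f$ is closed'' and ``the topology of $X$ is induced from $Y$''. \emph{Step 1 (decoding the square).} First I would describe the maps occurring in a lifting square. A continuous map $X\to\{a\llrra b\to c\}$ amounts to an open set $U\subseteq X$ (the preimage of the open subset $\{a,b\}$, with $X\setminus U$ the preimage of $c$) together with an \emph{arbitrary} partition $U=U_a\sqcup U_b$ --- arbitrary because $a$ and $b$ are topologically indistinguishable, so the only open sets of $\{a\llrra b\to c\}$ are $\emptyset,\{a,b\},\{a,b,c\}$. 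A continuous map $Y\to\{a\llrra b=c\}$ is just an arbitrary partition $Y=Y_a\sqcup Y_b$, since that target is antidiscrete. Chasing commutativity $j\circ\alpha=\beta\circ f$, one sees the square is completely determined by the pair consisting of $A:=Y_a\subseteq Y$ and the open set $U\subseteq X$, subject to the single constraint $f^{-1}(A)\subseteq U$ (commutativity forces $U_a=f^{-1}(A)$ and $U_b=U\setminus f^{-1}(A)$). In the same way a diagonal filler $Y\to\{a\llrra b\to c\}$ is given by an open set $V\subseteq Y$, and the two lifting equations $\gamma f=\alpha$, $j\gamma=\beta$ reduce precisely to $A\subseteq V$ and $f^{-1}(V)=U$. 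Hence $f$ has the left lifting property against $j$ if and only if: \emph{for every $A\subseteq Y$ and every open $U\subseteq X$ with $f^{-1}(A)\subseteq U$ there is an open $V\subseteq Y$ with $A\subseteq V$ and $f^{-1}(V)=U$.}

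\emph{Step 2 (simplifying the condition).} Fix an open $U\subseteq X$ and put $U^{*}:=\{\,y\in Y: f^{-1}(y)\subseteq U\,\}=Y\setminus f(X\setminus U)$, the largest subset of $Y$ whose full preimage lies in $U$; note that any $V$ with $f^{-1}(V)\subseteq U$ already satisfies $V\subseteq U^{*}$, and $f^{-1}(U^{*})\subseteq U$ always holds. Consequently it suffices to test the condition of Step~1 on the largest admissible choice $A=U^{*}$: a filler $V$ for $A=U^{*}$ is squeezed between $U^{*}\subseteq V\subseteq U^{*}$ and so equals $U^{*}$, and this $V=U^{*}$ then works for every smaller admissible $A$. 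So the lifting property becomes: \emph{for every open $U\subseteq X$ the set $U^{*}$ is open in $Y$ and $f^{-1}(U^{*})=U$.}

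\emph{Step 3 (identifying the two conditions).} By the identity $U^{*}=Y\setminus f(X\setminus U)$, the requirement ``$U^{*}$ open for every open $U\subseteq X$'' is exactly ``$f$ carries closed sets to closed sets''. Granting this, $U=f^{-1}(U^{*})$ exhibits every open subset of $X$ as a preimage of an open subset of $Y$, so the topology of $X$ is induced from $Y$; conversely, if the topology is induced then every open $U\subseteq X$ equals $f^{-1}(W)$ for some open $W\subseteq Y$, hence is a union of fibres, and since $W\subseteq U^{*}$ one gets $U=f^{-1}(W)\subseteq f^{-1}(U^{*})\subseteq U$, i.e.\ $f^{-1}(U^{*})=U$. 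This yields the stated equivalence. (If one prefers to bypass Step~2, the implication from ``closed $+$ induced'' can be checked head-on: given a square $(A,U)$, write $U=f^{-1}(W)$, observe $A\subseteq U^{*}$ since $f^{-1}(A)\subseteq U$, and take $V:=W\cup U^{*}$, which is open, contains $A$, and satisfies $f^{-1}(V)=U\cup f^{-1}(U^{*})=U$.)

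I expect the main obstacle to be the bookkeeping of Step~1: one must keep track of the spurious extra datum --- the partition $U=U_a\sqcup U_b$ --- that a map into $\{a\llrra b\to c\}$ carries because of the isomorphism $a\llrra b$, check that commutativity of the square pins it down to $f^{-1}(A)$, and verify that the two filler equations genuinely collapse to $A\subseteq V$ together with $f^{-1}(V)=U$ with no hidden leftover constraints. Once the lifting property has been faithfully translated into the displayed preimage condition, Steps~2 and~3 are the routine and standard manipulation of the operator $U\mapsto Y\setminus f(X\setminus U)$, parallel to the proofs of items (7) and (9).
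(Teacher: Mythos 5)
Your proof of item~(10) is correct: the translation of a lifting square for $X\to Y$ against $\{a\leftrightarrow b\to c\}\to\{a\leftrightarrow b=c\}$ into the condition ``for every $A\subseteq Y$ and every open $U\subseteq X$ with $f^{-1}(A)\subseteq U$ there is an open $V\supseteq A$ with $f^{-1}(V)=U$'', the observation that the partition $U=U_a\sqcup U_b$ is pinned down by commutativity, and the reduction to the single operator $U\mapsto U^{*}=Y\setminus f(X\setminus U)$ all check out, and this is exactly the ``expand the definitions'' strategy the paper applies to the items it treats explicitly (surjectivity, fibrewise $T1$, injectivity, extremal disconnectedness) before declaring the rest analogous. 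The only caveat is scope: the lemma asserts ten equivalences and your write-up establishes just the last one (which the paper itself leaves to the reader), so items (1)--(9) would still require their own, mostly easier, unwindings of the same kind, as you yourself suggest by calling (7) and (9) parallel.
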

\begin{proof} Verification is a matter of expanding the definitions. We do only a few of the items. 
1. This lifting property says that each point of $Y$ (the image of \verb$o$ in \verb$\{o}-->Y$) has a preimage.
2. Let $x,y\in X$ be arbitrary. 
The map sending \verb|o| to $x$ and \verb|c| to y, is continuous iff the closure of $x$ contains $y$. 
The commutativity of the square means $x$ and $y$ lie in the same fibre. The commutativity of the triangles means $x=y$.
3. This lifting property says that each decomposition of $X=A\cup B$ (the preimages 
of points \verb$a$ and \verb$b$ in \verb$X-->{a<->b}$) induces a decomposition $Y=p(A)\cup p(B)$ of $Y$ where we denote $p:X\longrightarrow Y$. This is evidently equivalent to injectivity. 
5. Recall that a topological space $X$ is extremally disconnected iff the closure of an open subset is closed.
This lifting property says for every two disjoint open subsets $U$ and $V$ of $X$ (the preimages of \verb$u$ and \verb$v$ in \verb$X-->{u->a=b<-v}$) there is a decomposition of $X$ into two closed and open subsets $U'$ and $V'$ (the preimages of subspaces 
\verb${u->a}$ and \verb$b<-v$ in \verb$X-->{u->a,b<-v}$) such that $U\subset U'$ and $V\subset V'$. If $X$ is extremally disconnected, 
then taking $U'$ to be the closure of $U$ gives the decomposition.

Alternatively but equivalently, the lifting property says that each $U\subset A\subset X$ where $U$ is open and $A$ is closed (the preimages of subspaces 
\verb$u$ and \verb$u->a$ in \verb$X-->{u->a,b<-v}$) is separated by 
a closed and open subset $A'$ such that $U\subset A' \subset A$. 
Take $A:=\bar U$ to be the closure of $U$; then necessarily $A'=A$.
Hence, the lifting property implies that the the closure of an open subset is open, 
i.e.~$X$ is extremally disconnected. 
The rest are analogous. 
\end{proof}

Following [Bourbaki, I\S6.5, Definition 5, Example], given an ultrafilter $\mathfrak U$ on the set of points of a space $B$, 
define $B\sqcup_{\mathcal U} \{\infty\}$ to be the space $B$ adjoined with a new closed point $\infty$
such that a subset is open iff it is either an open subset of $B$, or a union of $\{\infty\}$ 
and a $\mathfrak U$-big open subset of $B$. 

\begin{lemm}In the category of (all) topological spaces, the following holds.\begin{enumerate}
\item A map $X\lra Y$ is proper iff 
for each set $A$ viewed as a discrete topological space, each ultrafilter $\mathcal U$ on $A$ it holds 
$$B \longrightarrow B\sqcup_{\mathcal U} \{\infty\} \rtt X\to Y$$

\item The class $\{\verb|{u->a,b<-v}-->{u->a=b<-v}|\}^\lr$ is contained in the class of surjective proper morphisms, and, moreover,
\begin{itemize}
\item $\emptyset\lra E\in \{\verb|{u->a,b<-v}-->{u->a=b<-v}|\}^\lrl$ iff $E$ is extremally disconnected
\item if $X\lra Y \in \{\verb|{u->a,b<-v}-->{u->a=b<-v}|\}^\lr$ and $X$ is compact Hausdorff,  so is $Y$
\end{itemize}

\item Let $P$ be the set of all proper morphisms of finite topological spaces mentioned 
in the right lifting properties of Lemma~\ref{lem:lrlp} (i.e.~items 5-9), 
with or without item 6 (extremally disjoint).
The class $(P)^\lr$ consists of proper morphisms, 
and contains each proper morphism of compact Hausdorff spaces. 

\item Let $P'$ be the set of surjective proper morphisms of finite topological spaces mentioned 
in the right lifting properties of  Lemma~\ref{lem:lrlp} (i.e.~items 5-8). 
The class $(P')^\lr$ consists of surjective proper morphisms, 
and contains each surjective proper morphism of compact Hausdorff spaces.
\end{enumerate}
\label{lem:surjpr}\end{lemm}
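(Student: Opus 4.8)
The plan is to prove part~(1) by unwinding the lifting property and citing [Bourbaki], and then to deduce parts~(2)--(4) from part~(1), from Lemma~\ref{lem:lrlp}, from the elementary stability of proper and of surjective continuous maps, and --- for the positive halves of (3) and (4) --- from the Engelking--Taimanov theorem, which is the only substantial external input and where the main obstacle is concentrated.

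\emph{Part (1).} Since $A$ carries the discrete topology, a continuous map $A\lra Z$ is an arbitrary function $f$; I claim that its continuous extensions along $A\lra A\sqcup_{\mathcal U}\{\infty\}$ correspond to the points $z\in Z$ with $f_*\mathcal U\to z$, where $f_*\mathcal U$ is the pushforward ultrafilter. Indeed, the open neighbourhoods of $\infty$ are exactly the sets $\{\infty\}\cup W$ with $W\in\mathcal U$, so continuity of an extension sending $\infty\mapsto z$ is precisely the requirement that $f^{-1}(O)\in\mathcal U$ for every open $O\ni z$. Applying this over $X$ and over $Y$, and using $p_*(f_*\mathcal U)=(p\circ f)_*\mathcal U$, the lifting property for a fixed triple $(A,\mathcal U,f)$ reads: if $p_*(f_*\mathcal U)\to y$ then there is $x\in X$ with $f_*\mathcal U\to x$ and $p(x)=y$. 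As $(A,\mathcal U,f)$ varies, the ultrafilters $f_*\mathcal U$ run over all ultrafilters on $X$, so the whole family of lifting properties asserts that for every ultrafilter $\mathcal V$ on $X$ and every $y\in Y$ with $p_*\mathcal V\to y$ there is $x\in X$ with $\mathcal V\to x$ and $p(x)=y$; since for ultrafilters limit and cluster points coincide, this is the filter characterisation of properness in [Bourbaki, General Topology, I\S10.2].

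\emph{Part (2).} Write $m$ for the morphism of Lemma~\ref{lem:lrlp}(6). The middle bullet is immediate, since $\emptyset\lra E\in\{m\}^\lrl$ is by definition the lifting property of Lemma~\ref{lem:lrlp}(6), i.e.\ extremal disconnectedness of $E$. For the first assertion, one checks by inspection that $m$ is a surjective closed map of finite spaces with finite --- hence quasi-compact --- fibres, so $m$ is surjective and proper; therefore, by Lemma~\ref{lem:lrlp}(1) and part~(1), $m$ has the right lifting property with respect to $\emptyset\lra\{o\}$ and with respect to every $A\lra A\sqcup_{\mathcal U}\{\infty\}$, i.e.\ all of these morphisms lie in $\{m\}^\lrl$. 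Since $(-)^\rlr$ reverses inclusions and $(P_1\cup P_2)^\rlr=P_1^\rlr\cap P_2^\rlr$,
\[
\{m\}^\lr=(\{m\}^\lrl)^\rlr\ \subset\ \left(\{\emptyset\lra\{o\}\}\cup\{A\lra A\sqcup_{\mathcal U}\{\infty\}\}\right)^\rlr=\{\emptyset\lra\{o\}\}^\rlr\cap\{A\lra A\sqcup_{\mathcal U}\{\infty\}\}^\rlr,
\]
which by Lemma~\ref{lem:lrlp}(1) and part~(1) is the class of surjective proper maps. For the last bullet, if $X\lra Y\in\{m\}^\lr$ and $X$ is compact Hausdorff, then the map is surjective and proper by the above, so $Y$ is quasi-compact as a continuous image of $X$; and $Y$ is Hausdorff by the standard argument --- given $y_1\neq y_2$, the fibres $f^{-1}(y_i)$ are quasi-compact, hence compact and closed in the Hausdorff space $X$ and so contained in disjoint open sets $U_i$, whereupon $V_i:=Y\setminus f(X\setminus U_i)$ are open (as $f$ is closed), contain $y_i$, and satisfy $V_1\cap V_2=\emptyset$ because $z\in V_1\cap V_2$ forces $f^{-1}(z)\subset U_1\cap U_2=\emptyset$, contradicting surjectivity.

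\emph{Parts (3) and (4).} The inclusions of $(P)^\lr$ in the proper morphisms and of $(P')^\lr$ in the surjective proper morphisms are the same orthogonality bookkeeping: by inspection every morphism in $P$ is a proper morphism of finite spaces and every morphism in $P'$ is in addition surjective, so by part~(1) (and Lemma~\ref{lem:lrlp}(1)) the morphisms $A\lra A\sqcup_{\mathcal U}\{\infty\}$, together with $\emptyset\lra\{o\}$ in the case of $P'$, all lie in $(P)^\lrl$, resp.\ $(P')^\lrl$, and reversing inclusions under $(-)^\rlr$ gives the claim. Here one must take care to include only genuinely proper morphisms --- for instance the morphism of Lemma~\ref{lem:lrlp}(10) is not closed, hence not proper, which is why $P$ is built from items 5--9 and $P'$ from items 5--8. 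The opposite inclusions --- that every proper morphism of compact Hausdorff spaces lies in $(P)^\lr$, and every surjective proper morphism of compact Hausdorff spaces in $(P')^\lr$, even for the smaller $P$ without item~6 --- are the content of the Engelking--Taimanov theorem, which, translated into the present notation, says precisely that a (surjective) continuous map of compact Hausdorff spaces has the right lifting property with respect to every morphism in $(P)^\lrl$, resp.\ $(P')^\lrl$; I would recall its statement and make this observation. Apart from this classical input, the only step needing care is the explicit finite verification that each listed morphism is proper, as the non-closed morphism of Lemma~\ref{lem:lrlp}(10) warns.
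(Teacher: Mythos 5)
Your parts (1) and (2) are sound, and for part (1) your ultrafilter argument is actually more self-contained than the paper's, which defers the reduction to [Bourbaki] and [mintsGE]. Note only that for the last bullet of (2) you prove the implication as literally stated ($X$ compact Hausdorff implies $Y$ compact Hausdorff, via surjectivity, closedness and compact fibres), whereas the paper's proof argues the reverse direction by lifting $\{o\ra c\}\lra\{o=c\}$ against the generating morphism; your reading matches the statement.

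The genuine gap is in parts (3) and (4). Engelking--Taimanov is a theorem about extending maps into compact Hausdorff spaces from \emph{dense} subspaces; it does not ``say precisely'' that (surjective) proper maps of compact Hausdorff spaces lift against everything in $(P)^\lrl$ or $(P')^\lrl$. To invoke it you must first identify these left orthogonals, and that is where the two cases diverge. For $P$ (items 5--9, which include the non-surjective morphism $\{c\}\lra\{o\ra c\}$ detecting dense image), one shows $(P)^\lrl$ consists of dense subspace inclusions, and only then does Engelking--Taimanov apply; you never carry out this identification. For $P'$, which contains only surjective morphisms, the identification yields something strictly larger: $(P')^\lrl$ contains every $\emptyset\lra B$ with $B$ extremally disconnected, and more generally open inclusions $A\lra B$ that need not be dense. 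Lifting a surjective proper map of compact Hausdorff spaces against $\emptyset\lra B$ for $B$ compact Hausdorff extremally disconnected is exactly Gleason's projectivity theorem --- it is not a consequence of Engelking--Taimanov, and your proof never invokes Gleason. The paper's argument for (4) takes an open inclusion $A\lra B$ in $(P')^\lrl$, uses the lifting against the extremally-disconnected morphism to show that the closure $\bar A$ is clopen, and then constructs the lift separately on $\bar A$ (where $A$ is dense, so Engelking--Taimanov applies) and on the clopen complement $B\setminus\bar A$ (which the same lifting property forces to be extremally disconnected, so Gleason applies). Without that decomposition and without Gleason, your part (4) does not go through.
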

\begin{proof}
1. 
[Bourbaki, General Topology, I\S10.2,Th.1(d)] almost states this lifting property: 
they take $A=|X|$ to be the the set of points of $X$ with discrete topology,
and the horizontal map $A\lra X$ in the square to be identity on points. 
An elementary argument shows that only the image in $X$ would matter,
and thus shows the equivalence of the statement by Bourbaki and this lifting property.
See [mintsGE, \S2.2.2] for details.  

2. The map \verb|{u->a,b<-v}-->{u->a=b<-v}| is both surjective and closed, 
which is the same as proper for maps of finite topological spaces. 
By Lemma~\ref{lem:lrlp} both being surjective and being proper 
are right Quillen negations.  
Hence, each map in $(P_0)^\lr$ is both surjective and proper, 
where $P_0:=\{\verb|{u->a,b<-v}-->{u->a=b<-v}|\}$ is the class
consisting of a single morphism \verb|{u->a,b<-v}-->{u->a=b<-v}|.
Lemma~\ref{lem:lrlp}(6) states that the lifting property defines
extremally disconnectedness. 
Moreover, note 
$\verb|{o->c}-->{o=c} /_ {u->a,b<-v}-->{u->a=b<-v}|$.
By Lemma~\ref{lem:lrlp}(2) this implies that if 
each point $Y$ is closed (i.e.~$Y$ satisfies separation axiom $T1$)
and $X\lra Y$ is in $(P_0)^\lr$, then each point of $X$ is closed. 
For compact spaces, axiom $T1$ implies $T2$ (being Hausdorff). Hence,
if $X\lra Y$ is in $(P_0)^\lr$ and 
if $Y$ is compact Hausdorff, so is $X$.

3.  Each morphism in $P$ is proper, hence the morphisms mentioned in item 1 are in $P^\lrl$,
  hence, again by item 1, each morphism in $P^\lr$ is proper. 
Lemma~\ref{lem:lrlp}(5-9) imply that $P^\lrl$ consists of inclusions $A\lra B$ where $A$ is a dense subset of $B$. 
A classic theorem in topology known as Engelking or Taimanov theorem 
says that a map to a compact Hausdorff space $K$ always extends from a dense subset $A$ to the whole domain $B$,
i.e.~$A\lra B \rtt K\to \verb|{o}|$; in fact the proof of this theorem also gives that 
$A\lra B \rtt K_1\lra K_2$ holds for any proper map $K_1\lra K_2$ of normal Hausdorff spaces. 
See [mintsGE,\S2.2] for a discussion. 

4. Each map in $(P)$ is both surjective and proper, and both 
being surjective and being proper are defined by right lifting properties. This implies that
each map in $(P)$ is both surjective and proper. 

Now let $X\lra Y$ be a surjective proper map of compact Hausdorff spaces. 
We need to show that it is in $(P)^\lr$, 
i.e.~that for each $A\lra B \in (P)^\lrl$
it holds $ A\lra B \rtt X\lra Y$. 

We know that $A\lra B$ lifts with respect to each map in $P$, hence by Lemma~\ref{lem:lrlp}
we may assume that $A$ is an open subset of $B$, and the map is the inclusion. 

Let $\bar A=\operatorname{Cl}\operatorname{Im}_B(A)$ be the closure of $A$ in $B$.

Now consider the lifting property \verb$A-->B /_ {u->a,b<-v}-->{u->a=b<-v}$ 
defining extremally disconnected.  Take \verb$A-->{u->a,b<-v}$ taking $A$ to $u$,
and \verb$B-->{u->a=b<-v}$ sending $A$ to \verb$u$, and $\bar A\setminus A$ to \verb$a=b$,
and $B\setminus \bar A$ to \verb$v$. If $A$ is non-empty, the lifting property implies 
that $\bar A$ is open.  Hence, both  $\bar A$ 
and $B\setminus \bar A$ are closed open subsets, and 
to construct the diagonal map, it is enough to construct it separately 
on $\bar A$ and $B\setminus \bar A$. As $A$ is dense in $\bar A$, by Lemma above 
$A\to \bar A$ lifts with respect to any proper map of compact Hausdorff spaces;
this implies the former. 

Note that \verb$A-->B /_ {u->a,b<-v}-->{u->a=b<-v}$
implies that $$\emptyset \lra B\setminus \bar A\rtt \verb|{u->a,b<-v}-->{u->a=b<-v}|$$
i.e.~that $ B\setminus \bar A$ is extremally disconnected.
Finally, the theorem of Gleason that extremally disconnected sets are projective 
in the subcategory of compact Hausdorff spaces with proper maps, 
says precisely that this lifting property holds for each surjective proper map of compact Hausdorff spaces. 
\end{proof}
\begin{rema} We rely on the Gleason theorem rather than reproduce its proof. 
Probably a careful reformulation of Lemmas~2.1 and 2.4 of [Gleason] shall turn the proof there
	into a diagram chasing calculation with finite preorders.  
\end{rema}

\subsection{Being a surjective image of a compact extremally disconnected space}

We start with the observation that the map \verb|{u->a,b<-v}-->{u->a=b<-v}| 
appearing in the definition of extremally disconnected, is surjective and proper, and that being surjective and being proper 
are right Quillen negations.  
Hence, each map in $(P_0)^\lr$ is both surjective and proper, 
where $P_0:=\{\verb|{u->a,b<-v}-->{u->a=b<-v}|\}$ is the class
consisting of a single morphism \verb|{u->a,b<-v}-->{u->a=b<-v}|.
Moreover, note 
$\verb|{o->c}-->{o=c} /_ {u->a,b<-v}-->{u->a=b<-v}|$.
This implies that if each point of $Y$ is closed (i.e.~$Y$ satisfies separation axiom $T1$)
and $X\lra Y$ is in $(P_0)^\lr$, then each point of $X$ is closed. 
For compact spaces, axiom $T1$ implies $T2$ (being Hausdorff). Hence,
if $X\lra Y$ is in $(P_0)^\lr$ and 
if $Y$ is compact Hausdorff, so is $X$.


Hence:
\begin{enonce}{Observation}\label{obs:1}
The fact that each topological space admits a surjection from an extremally disconnected space, 
and, moreover, each compact Hausdorff topological space admits a surjection from an 
compact Hausdorff extremally disconnected space, is implied by the following. 

Each morphism $\emptyset \lra X$ decomposes as 
$\emptyset\xrightarrow{(P_0)^\lrl} E \xrightarrow{(P_0)^\lr} X  $
where $P_0:= \{\verb|{u->a,b<-v}-->{u->a=b<-v}|\}$ is a class of morphisms consisting of a single morphism which is both surjective and proper (and hence so is any map 
in $(P_0)^\lr$).
\end{enonce}

In fact, this decomposition (weak factorisation system) implies that
the extremally disconnected subspaces are projective in the (not full!) 
subcategory of topological spaces with morphisms in $(P_0)^\lr$, 
and that subcategory has enough projectives. 

Unfortunately, not each surjective proper map of compact Hausdorff spaces is in
 $(P_0)^\lr$. Indeed, if the domain is connected, then it maps to one of the connected components
 \verb|{u->a}| or \verb|{b<-v}|, and by surjectivity the codomain does as well.
 Hence, any surjective map from a connected space is in $(P_0)^\lrl$
 and thus not in $(P_0)^\lr$ unless is an isomorphism.

\subsection{Extremally disconnected spaces being projective} 
Let $P'$ denote the class of
all the closed (necessarily proper) surjective 
maps mentioned 
in right lifting properties in Lemma~\ref{lem:lrlp} (we give the list of morphisms in various notations): 
$$\hskip-42pt\begin{array}{ccccc}
	\{u\rotatebox{-12}{\ensuremath{\to}}\raisebox{-2pt}{\ensuremath{a\,\,,\hskip-6pt\raisebox{3pt}{\bf\color{red}=}b}\rotatebox{13}{\ensuremath{\leftarrow}}} u\} & 
\{a \leftrightarrow \!\!\!\!\!\!\raisebox{6pt}{{\color{red}\bf\!\!=\,\,}} b\}& 
\{a \rightarrow \!\!\!\!\!\!\raisebox{6pt}{{\color{red}\bf\!\!=\,\,}} b\}& 
	\{\raisebox{0pt}{\ensuremath{a\llrra b}}\raisebox{6pt}{\color{red}\bf\,=}\!\!\!\!\!\!\rotatebox{-12}{\ensuremath{\to}}  \raisebox{-2pt}{c}\}
\\
	\{u\rotatebox{-12}{\ensuremath{\to}}\raisebox{-2pt}{\ensuremath{a,b}\rotatebox{13}{\ensuremath{\leftarrow}}} u\}\lra \{u\rotatebox{-12}{\ensuremath{\to}}\raisebox{-2pt}{\ensuremath{a=b}\rotatebox{12}{\ensuremath{\leftarrow}}}v\} &
\{a\llrra b\}\lra \{a=b\} & \{o\ra c\}\lra \{o=c\} & \{a\llrra b\ra c\}\lra \{a\llrra b=c\} &
\\	\verb|{u->a,b<-v}-->{u->a=b<-v}|& \verb|{a<->b}-->{a=b}|& \verb|{o->c}-->{o=c}|& \verb|{a<->b->c}-->{a<->b=c}|&
\\\text{(extremally disconnected)}        & \text{(injective)}            & \text{(pullback topology)}  & \text{(closed map and pullback topology)}&
\end{array}$$
We can combine together the latter three morphisms and take instead e.g.
$$\hskip-0pt\begin{array}{ccccc}
P'':=\{&  \verb|{u->a,b<-v}-->{u->a=b<-v}|,& \verb|{a<->b->c<->d}-->{a<->b=c=d}|&\}
\end{array}$$


%
%

We summarise the considerations above as
\begin{enonce}{Observation}\label{obs:2}
The fact that extremally disconnected space are projective in the category of compact Hausdorff spaces with proper maps, and this category has enough projectives, 
is implied by the decomposition above and the following. 

Each morphism $\emptyset \lra X$ decomposes as 
$\emptyset\xrightarrow{(P')^\lrl} E \xrightarrow{(P')^\lr} X  $
where 
$$P':=\{\verb|{u->a,b<-v}-->{u->a=b<-v}, {a<->b}-->{a=b}, {a<->b->c}-->{a<->b=c}|\}
$$ 
is a class of morphisms consisting of  surjective proper morphisms. 
\end{enonce}
\begin{proof} Use Lemma~\ref{lem:surjpr}(3). Use the observation above
to construct a compact Hausdorff $E$ fitting the decomposition. 
We may omit \verb${o->c}-->{o=c}$
because the map gluing together \verb$a$ and \verb$b$ in 
\verb${a<->b->c}-->{a<->b=c}$ gives \verb${o->c}-->{o=c}$.
\end{proof}

\begin{rema} 
\,[Analytic,p7] writes ``for part (2) [the sheaf condition on condensed sets represented by topological spaces] 
the key point is that any surjective map of profinite sets is a quotient''. In fact any surjective proper map is a quotient,
and Lemma~\ref{lem:lrlp} and ~\ref{lem:surjpr} allows to express this as:
$$ P'^\lr\subset \{\verb|{o->c}-->{o<->c}|\}^\lrl $$ 
Explicitly, 
$$\hskip-42pt\{\verb|{u->a,b<-v}-->{u->a=b<-v}, {a<->b}-->{a=b}, {a<->b->c}-->{a<->b=c}|\}^\lr \subset 
\{\verb|{o->c}-->{o<->c}|\}^\lrl $$
$$\hskip-42pt\left\{\{u\rotatebox{-12}{\ensuremath{\to}}\raisebox{-2pt}{\ensuremath{a,b}\rotatebox{13}{\ensuremath{\leftarrow}}} v\}\lra \{u\rotatebox{-12}{\ensuremath{\to}}\raisebox{-2pt}{\ensuremath{a=b}\rotatebox{12}{\ensuremath{\leftarrow}}}v\},
\{a\llrra b\rotatebox{-12}{\ensuremath{\to}}\raisebox{-2pt}{\ensuremath{c\llrra d}}\}\lra 
\{a\llrra b=c=d\}\right\}^\lr \subset 
\left\{\{o\rotatebox{-12}{\ensuremath{\to}}\raisebox{-2pt}{\ensuremath{c}}\}\lra \{o\llrra c\}\right\}^\lrl$$
\label{rema:quot}\end{rema}

\section{Appendix A. A list of reformulations of topological definitions}

Here we give a list of examples of iterated lifting properties (negations) written in our notation.
Sometimes we skip $\rtt$ for readability. Most of the list below is taken from [mintsGE, \S5.2].

\subsection{\label{app:rtt-top}Examples of iterated orthogonals obtained from maps between finite topological spaces.}
Here we give a list of examples of iterated orthogonals starting from maps between finite topological spaces 
defining well-known properties of topological spaces.

In the category of topological spaces,
\begin{enumerate}

\item        $(\emptyset\longrightarrow \{o\})^r$   is the class of surjections
\item        $(\emptyset\longrightarrow \{o\})^r$   is the class of maps $A\lra B$ where $A\neq \emptyset$ or $A=B$
\item        $(\emptyset\longrightarrow \{o\})^{rr}$ is the class of subsets, i.e. injective maps $A\hookrightarrow B$ where the topology on $A$ is induced from $B$

\item $(\emptyset\longrightarrow \{o\})^{lr}$ is the class of maps $\emptyset\longrightarrow B$, $B$ arbitrary
\item        $(\emptyset\longrightarrow \{o\})^{lrr}$ is the class of maps $A\longrightarrow B$ which admit a section

\item  $(\emptyset\longrightarrow \{o\})^l$ consists of maps $f:A\longrightarrow B$ such that either  $A\neq \emptyset$ or  $A=B=\emptyset$

\item $(\emptyset\longrightarrow \{o\})^{rl}$ is the class of maps of form $A\longrightarrow A\sqcup D$ where $D$ is discrete

\item $\{ \{z\llrra x\llrra y\ra c\}\lra\{z=x\llrra y=c\} \}^\lrl  = \{\{c\}\lra \{o\ra c\}\}^\lr$ is the class of closed inclusions $A\subset B$ where $A$ is closed

\item $\{ \{z\llrra x\llrra y\la c\}\lra\{z=x\llrra y=c\} \}^\lrl$ is the class of open inclusions
$A\subset B$ where $A$ is open

\item $\{ \{x\llrra y\ra c\}\lra\{x\llrra y=c\} \}^\lrl $ is the class of closed maps $A\lra B$ 
where the topology on $A$ is pulled back from $B$
\item $\{ \{x\llrra y\la c\}\lra\{x\llrra y=c\} \}^\lrl$ is the class of open maps $A\lra B$ 
where the topology on $A$ is pulled back from $B$

\item        $(\{b\}\longrightarrow \{a{\small\searrow}b\})^l$ is the class of maps with dense image
\item        $(\{b\}\longrightarrow \{a{\small\searrow}b\})^{lr}$ is the class of closed subsets $A \subset  X$, $A$ a closed subset of $X$
\item $( \{a{\small\searrow}b\}\lra\{a=b\})^l$ is the class of injections

\item        $((\{a\}\longrightarrow \{a{\small\searrow}b\})^r_{<5})^{lr}$ is roughly the class of proper maps
 


\end{enumerate}

\subsection{\label{app:rtt-top-prop}Examples of properties of topological spaces expressed as iterated orthogonals of maps between finite topological spaces.}
Here give a list of examples of well-known properties defined by 
iterated orthogonals starting from maps between finite topological spaces, often with less than 5 elements. 

\begin{enumerate}

\item $\{\bullet\}\lra A$ is in $(\emptyset\longrightarrow \{o\})^{rll}$ iff $A$ is connected
\item 
$Y$ is totally disconnected iff $\{\bullet\}\xra y Y$ is in $(\emptyset\longrightarrow \{o\})^{rllr}$ for each map  $\{\bullet\}\xra y Y$ (or, 
in other words, each point $y\in Y$).

\item  a Hausdorff space $K$ is compact iff $K\longrightarrow \{o\}$ is in  $((\{o\}\longrightarrow \{o{\small\searrow}c\})^r_{<5})^{lr}$
\item  a  Hausdorff space $K$ is compact iff $K\longrightarrow \{o\}$ is in  $$
     \{\, \{a\leftrightarrow b\}\longrightarrow \{a=b\},\, \{o{\small\searrow}c\}\longrightarrow \{o=c\},\,
     \{c\}\longrightarrow \{o{\small\searrow}c\},\,\{a{\small\swarrow}o{\small\searrow}b\}\longrightarrow \{a=o=b\}\,\,\}^{lr}$$

\item  a space $D$ is discrete iff $ \emptyset \longrightarrow  D$ is in $   (\emptyset\longrightarrow \{o\})^{rl}      $

\item  a space $D$ is antidiscrete iff $ \ensuremath{D} \longrightarrow  \{o\} $ is in 
$(\{a,b\}\longrightarrow \{a=b\})^{rr}= (\{a\leftrightarrow b\}\longrightarrow \{a=b\})^{lr} $ 

\item  a space $K$ is connected or empty iff $K\longrightarrow \{o\}$ is in  $(\{a,b\}\longrightarrow \{a=b\})^l $
\item  a space $K$ is totally disconnected and non-empty iff $K\longrightarrow \{o\}$ is in  $(\{a,b\}\longrightarrow \{a=b\})^{lr} $

\item  a space $K$ is connected and non-empty
 iff
 for some arrow $\{o\}\longrightarrow K$\\
$\text{ \ \ \ \ \     } \{o\}\longrightarrow K$ is in
            $   (\emptyset\longrightarrow \{o\})^{rll} = (\{a\}\longrightarrow \{a,b\})^l$

\item  a space $K$ is non-empty iff $K\longrightarrow \{o\}$ is in $   (\emptyset\longrightarrow \{o\})^l$
\item  a space $K$ is empty iff $K \longrightarrow \{o\}$ is in $   (\emptyset\longrightarrow \{o\})^{ll}$
\item  a space $K$ is $T_0$ iff $K  \longrightarrow \{o\}$ is in $   (\{a\leftrightarrow b\}\longrightarrow \{a=b\})^r$
\item   a space $K$ is $T_1$ iff $K  \longrightarrow \{o\}$ is in $   (\{a{\small\searrow}b\}\longrightarrow \{a=b\})^r$
\item  a space $X$ is Hausdorff iff for each injective map $\{x,y\} \hookrightarrow  X$
it holds $\{x,y\} \hookrightarrow  \ensuremath{X} \,\rightthreetimes\,  \{ \ensuremath{x} {\small\searrow} \ensuremath{o} {\small\swarrow} \ensuremath{y} \} \longrightarrow  \{ x=o=y \}$

\item  a non-empty space $X$ is regular (T3) iff for each arrow $    \{x\} \longrightarrow  X$ it holds
    $    \{x\} \longrightarrow  \ensuremath{X} \,\rightthreetimes\,  \{x{\small\searrow}X{\small\swarrow}U{\small\searrow}F\} \longrightarrow  \{x=X=U{\small\searrow}F\}$
\item  a space $X$ is normal (T4) iff $\emptyset \longrightarrow \ensuremath{X} \,\rightthreetimes\,   \{a{\small\swarrow}U{\small\searrow}x{\small\swarrow}V{\small\searrow}b\}\longrightarrow \{a{\small\swarrow}U=x=V{\small\searrow}b\}$

\item  a space $X$ is completely normal iff $\emptyset\longrightarrow \ensuremath{X} \,\rightthreetimes\,  [0,1]\longrightarrow \{0{\small\swarrow}x{\small\searrow}1\}$
 where the map $[0,1]\longrightarrow \{0{\small\swarrow}x{\small\searrow}1\}$ sends $0$ to $0$, $1$ to $1$, and the rest $(0,1)$ to $x$

\item a space $X$ is hereditary normal iff 
$ \emptyset \to X \rightthreetimes 
\{ x \la au \leftrightarrow u' \la u \la uv \ra v \ra v'\leftrightarrow bv \ra x \} 
\longrightarrow
\{ x \la au \leftrightarrow u' = u \la uv \ra v = v'\leftrightarrow bv \ra x \} $

\item  a space $X$ is path-connected iff $\{0,1\} \longrightarrow  [0,1] \,\rightthreetimes\,  \ensuremath{X} \longrightarrow  \{o\}$
\item  a space $X$ is path-connected iff for each Hausdorff compact space $K$ and each injective map $\{x,y\} \hookrightarrow  K$ it holds
   $\{x,y\} \hookrightarrow  \ensuremath{K} \,\rightthreetimes\,  \ensuremath{X} \longrightarrow  \{o\}$

\end{enumerate}

\subsection{A sample of a computer syntax} Here we rewrite some of the examples above in a computer syntax. 
ASCII art on the right attempts to represent graphically the maps of preorders involved. 
\begin{verbatim}
compactness:   { {o}-->{o->c} }^r_{<5}^lr  ;  {o}-->{o->c} is a non-proper map
dense image:  { {c}-->{o->c} }^l   ; the image of {o}-->{o->c} is not dense
injection:       { {x,y}-->{x=y} }^r == { {x<->y}-->{x=y} }^l   '~'(' == .-.).
surjection:       { {}-->{o} }^r == { {}-->{o} }^rrl  simplest non-surjection  {}-->{o}
connected:       { {}-->{o} }^rll   { {x,y}-->{x=y} }^l  simplest non-connected space {x,y}     
discrete:    { {}-->{o} }^rl             )(.  
subset:    { {}-->{o} }^rr == {{x<->y->c}-->{x=y=c}}^l  )). ==  ~\(.  
closed subset:     { {z<->x<->y->c}-->{z=x<->y=c} }^l == {{c}-->{o->c}}^lr 
open subset:       { {z<->x<->y<-c}-->{z=x<->y=c} }^l  '~'~'\ ( '='~'=. 
normal (T4):          { {a<-b->c<-d->e}-->{b=c=d} }^l    /V\(/\ 
Tietze lemma  (not quite) : R-->{o} (- { {a<-b->c<-d->e}-->{b=c=d},{a<-b->c}-->{a=b=c} }^lr 
Urysohn lemma (not quite): R-->{a<-b->c} (- { {a<-b->c<-d->e}-->{b=c=d} }^lr  
Hausdorff:   {u,v}--(inj)-->X /_ {u->x<-v}-->{u=x=v} 
             i.e. any injection {a,b}-->X lifts wrt {u->x<-v}-->{u=x=v}
\end{verbatim}             
%

\subsubsection*{Avoiding repetitions} The reader would notice that the syntax above repeats almost everyting twice:
indeed, almost the same preorder appears on both sides of \verb|-->| arrow. Below we give a sample of possible notations 
avoiding this repeation, hence the notation below is intentionally not consistent. 

\begin{Verbatim}[commandchars=|\{\}]
compactness:   |{ |{o.|textcolor{red}{->c}.|} |}^r_|{<5|}^lr  ;  |{o|}-->|{o|textcolor{red}{->c}|} is a non-proper map
dense image:  |{ |{.|textcolor{red}{o->}.c|} |}^l   ; the image of |{o|}-->|{|textcolor{red}{o->}c|} is not dense
injection:       |{ |{x,.|textcolor{red}{=}.y|} |}^r == |{ |{x<->y|}-->|{x|textcolor{red}=y|} |}^l   '~'(' == .-.).
surjection:       |{ |{.|textcolor{red}o.|} |}^r == |{ |{|}-->|{o|} |}^rrl  simplest non-surjection  |{|}-->|{|textcolor{red}o|}
connected:       |{ |{.|textcolor{red}o.|} |}^rll   |{ |{x,.|textcolor{red}=.y|} |}^l  simplest non-connected space |{x,y|}     
discrete:    |{ |{.|textcolor{red}o.|} |}^)(             )(.  
subset:    |{ |{.|textcolor{red}o.|} |}^)) == |{|{x<|textcolor{red}=>y|textcolor{red}=>c|}|}^(  )). ==  ~\(.  
closed subset:     |{ |{z<=>x<->y=>c|} |}^l == |{|{o.|textcolor{red}{->c}.|}|}^lr 
open subset:       |{ |{z<=>x<->y<=c|} |}^l  '~'~'\ ( '='~'=. 
normal (T4):          |{ |{a<-b=>c<=d->e|} |}^l    /V\(/\ 
Tietze lemma  (not quite) : R-->|{o|} (- |{ |{a<-b=>c<=d->e|}, |{a<=b=>c|} |}^() 
Urysohn lemma (not quite): R-->|{a<-b->c|} (- |{ |{a<-b=>c<=d->e|} |}^lr  
Hausdorff:   |{u,v|}--(inj)-->X /_ |{u=>x<=v|} 
             i.e. any injection |{a,b|}-->X lifts wrt |{u.=.>x<.=.v|}
\end{Verbatim}

\section{Appendix B (unfinished)}\label{sec:lemma13}

In this appendix (not indented for publication) we experiment with notation for diagram chasing calculations.
We present an incomplete(!) diagram chasing calculating representing the proof of Lemma 1.3 [Analytic].
We hope our calculations give some evidence that it may be possible to use diagram chasing with preorders 
in an efficient formalisation of general topology.

\subsection{Statement and proof of Lemma 1.3} We quote [Analytic]:
\begin{quote}
Lemma 1.3. {\em Let $X_0\lra X_1 ...$ and $Y_0 \lra  Y_1 \lra ...$ be two sequences of compact Hausdorff
spaces with closed immersions. Then, inside the category of topological spaces, the natural map
	$$\bigcup_n X_n \times Y_n \lra (\bigcup_n X_n)\times (\bigcup Y_n)$$
is a homeomorphism; i.e.~the product on the right is equipped with its compactly generated topology.}
\begin{proof} The map is clearly a continuous bijection. In general, for a union like 
	$\cup_n X_n$, open
subsets $U$ are the subsets of the form $\cup_n  U_n$ where each $U_n \subset X_n$ is open. 
	Thus, let $U \subset \cup_n X_n\times Y_n$
be any open subset, written as a union of open subset $U_n \subset X_n \times Y_n$, and pick any point $(x,y) \in U$.
Then for any large enough $n$ (so that $(x,y) \in X_n \times Y_n)$, we can find open neighborhoods $V_n \subseteq X_n$
of $x$ in $X_n$ and $W_n  \subseteq Y_n$ of $y$ in $Y_n$, such that $V_n \times W_n  \subseteq U_n$. 
In fact, we can ensure that even
$\bar V_n \times \bar W_n  \subseteq U_n$ by shrinking $V_n$ and $W_n$. Constructing the $V_n$ and $W_n$  inductively, we may then
moreover ensure $V_ n \subseteq V_{n+1}$ and $W_n  \subseteq W_{n +1}$. Then $V = \bigcup_n V_n \subseteq \bigcup_n X_n$ and 
$W = \bigcup_ n W_n  \subseteq\bigcup_ 
n Y_n$ are open, and $V \times W = \bigcup_n V_n \times W_n  \subseteq U$ contains $(x,y)$, showing that $U$ is open in the
product topology.\end{proof}
\end{quote}

\subsubsection{Partially commutative diagrams: $@\{o\}$.} 
In a computation it is useful to consider partially commutative diagrams and we extend our lifting property notation 
accordingly.  Given a diagram, and 
a letter \verb$o$, possibly occurring in notation of one of the finite preorders (topological spaces), 
and an arrow $X\lra Y$ in the diagram, 
we label it by \verb$@{o}$ as $X\xra{@\{o\}} Y$ to indicate that we only care about commutativity requirements
with respect to elements {\em denoted} by \verb$o$.
In notation, we say that two paths 
$X=X_1 \xra{f_{1}}  ...  \xra{f_{k-1}}  X_k \xra[f_k]{@\{o\}} X_{k+1} \xra{f_{l+1}} X_{k+2}  \xra{f_{k+}}... \xra{f_{k'}}X_{k'+1}=Y$ and   
$Y=Y_1 \xra{g_{1}}  ...  \xra{g_{l-1}}  Y_k \xra{g_l} Y_{l+1} \xra{g_{l+1}} Y_{l+2}  \xra{f_{l+3}}... \xra{f_{l'}}Y_{l'+1}=Y$ 
commute iff both $f_{k'}(f_{k'-1}(..f_1(o)...)=g_{l'}(g_{l'-1}(...g_1(o)...))$ whenever $X$ has a point denoted by $o$,
and 
 $f_{k'}(f_{k'-1}(..f_1(x)...)=g_{l'}(g_{l'-1}(...g_1(x)...))=o$ whenever $Y$ has a point denoted by $o$ and
 $f_{k'}(f_{k'-1}(..f_1(x)...)=o$.

\subsection{Expanding the colimits} The first step in the proof is to  ``expand'' the colimits and get a diagram without colimits. 
(1)
We need to show that the topology on $\bigcup_n X_n \times Y_n$ is induced from $(\bigcup_n X_n)\times (\bigcup_n Y_n)$, 
i.e. the lifting  property
$$\bigcup_n X_n \times Y_n \lra(\bigcup_n X_n)\times (\bigcup_n Y_n) \rtt \verb. {o->c}-->{o=c}.$$

(2) Being open and being commutative is defined pointwise, hence it is enough to construct 
the following diagram for each point 
$\{o\}\xra{(x,y)} \bigcup_n X_n \times Y_n $ 
$$\xymatrix{ \{o\} \ar[r]^{(x,y)} & \bigcup_n X_n \times Y_n \ar[r] \ar@{->}[d] & \{o\ra c\} \ar[d] 
\\ {} & (\bigcup_n X_n)\times (\bigcup_n Y_n)  \ar[r]|-{} \ar@{-->}[ur]|{@\{o\}} & \{o=c\} }$$

(3) Expanding the definition of product topology we see it is enough to construct the following diagram 
$$\hskip-42pt \xymatrix{ 
\{o\} \ar[r]^{(x,y)} & \bigcup_n X_n \times Y_n \ar[r]|{\text{}U} \ar@{->}[d]|f & \{vw=o\ra c=vw'=v'w=v'w'\} \ar[d]|g 
\\ {} & (\bigcup_n X_n)\times (\bigcup_n Y_n) \ar[dl] \ar[dr]	\ar@{-->}[ddd] \ar[r]|-{j} \ar@{-->}[ur]|{@\{o\}}& \{o=c\} 
\\  (\bigcup_n X_n) \ar@{-->}[d] & & (\bigcup_n Y_n) \ar@{-->}[d] 
\\ \{v\ra v'\} \ar[dr] & &   \{w\ra w'\} \ar[dl]
\\ & \{v\ra v'\}\times\{w\ra w'\}=\{vw\ra vw'\ra v'w', vw\ra v'w \ra v'w'\} \ar[uuuur]|{@\{o\}}
}$$

(4) To construct an arrow from the union/colimit one needs to construct compatible arrows from each $X_n$ and $Y_n$. 
In a diagram chasing computation, we may do so by showing 
the inductive step that given an arrow from $X_n$, you can always add an arrow from $X_{n+1}$
fitting into the same diagram, and the same for the $Y$'s. 
In fact we may label the arrows from $X_n$ and $Y_n$ by \verb$@{o}$: in natural language this means
we are constructing an increasing sequence of open subsets.
We may assume $(x,y)\in X_n\times Y_n$. 

It is sufficient to construct the following diagram. 
$$\hskip-42pt\hskip-42pt
 \xymatrix{ 
&\{o\} \ar[r]^{(x,y)} \ar[d]\ar[drr] & \bigcup_n X_n \times Y_n \ar[r]|{\text{}} \ar@{->}[d] & \{vw=o\ra c=vw'=v'w=v'w'\}  
\\ ...\longleftarrow X_{n+1}\ar[rd]\ar@{-->}[rdd]\ar@{<-}[r] &{X_n}\ar[d]\ar@/^3pc/@{->}[dd]|{@\{o\}} & (\bigcup_n X_n)\times (\bigcup_n Y_n) \ar[dl] \ar[dr]	\ar@{-->}[ddd]  
\ar@{-->}[ur]& Y_n\ar[d]\ar[r]\ar@/_3pc/@{->}[dd]|{@\{o\}} & Y_{n+1}\lra...\ar[ld]\ar@{-->}[ldd]
\\&  (\bigcup_n X_n)  & & (\bigcup_n Y_n) 
\\& \{o=v\ra v'\} \ar@{<-}[dr] & &   \{o=w\ra w'\} \ar@{<-}[dl]
\\& & \{v\ra v'\}\times\{w\ra w'\}=\{o=vw\ra ...\}
\ar[uuuur]|{@{\{o\}}}
}$$

(4) We now add to the diagram the products $X_n\times Y_n$ and $X_{n+1}\times Y_{n+1}$, and remove the product of colimits. 
$$\hskip-42pt
 \xymatrix{ 
&\{o\} \ar[r]^{(x,y)} \ar[d]\ar[dr]\ar[drr]  & \bigcup_n X_n \times Y_n \ar[r]|{} \ar@{<-}[d] & \{vw=o\ra c=vw'=v'w=v'w'\}  
\\ ...\longleftarrow X_{n+1}\ar[rd]\ar@{-->}[rdd]\ar@{<-}[r] &{X_n}\ar[d]\ar@/^3pc/@{->}[dd]|{@\{o\}} & X_n\times  Y_n \ar@<0.2mm>[r]\ar[r] \ar@<0.2mm>[l]\ar[l] \ar[dl] \ar[dr]	\ar[d]  
\ar[ur]& Y_n\ar[d]\ar[r] \ar@/_3pc/@{->}[dd]|{@\{o\}} & Y_{n+1}\lra...\ar[ld]\ar@{-->}[ldd]
\\&  (\bigcup_n X_n)  & X_{n+1}\times Y_{n+1} \ar[dd] \ar[ull] \ar[urr]  \ar[uur]& (\bigcup_n Y_n) 
\\& \{o=v\ra v'\} \ar@{<-}[dr] & &   \{o=w\ra w'\} \ar@{<-}[dl]
\\& & \{v\ra v'\}\times\{w\ra w'\}
\ar[uuuur]|{@\{o\}}
}$$

(6) Now remove more vertices no longer needed:
$$\hskip-42pt\small\hskip-31pt
\xymatrix{ 
&\{o\} \ar[rr]^{(x,y)} \ar[d]\ar[dr]\ar[drr]  &  & \{vw=o\ra c=vw'=v'w=v'w'\}  
\\ ...\longleftarrow X_{n+1}\ar@{-->}[rdd]\ar@{<-}[r] &{X_n}\ar@/^3pc/@{->}[dd]|{@\{o\}} 
& X_n\times  Y_n \ar@<0.2mm>[r]\ar[r] \ar@<0.2mm>[l]\ar[l] 	\ar[d]  
\ar[ur]& Y_n\ar[r] \ar@/_3pc/@{->}[dd]|{@\{o\}} & Y_{n+1}\lra...\ar@{-->}[ldd]
\\&    & X_{n+1}\times Y_{n+1} \ar[dd] \ar[ull] \ar[urr]  \ar[uur]& 
\\& \{o=v\ra v'\} \ar@{<-}[dr] & &   \{o=w\ra w'\} \ar@{<-}[dl]
\\& & \{v\ra v'\}\times\{w\ra w'\}
\ar[uuuur]|{@\{o\}}
}$$

\subsubsection{Preliminary lemmas}
Recall that a topological space $X$ is normal (T4)
if any two disjoint closed subsets of $X$ are separated by neighbourhoods,
or, equivalently by Urysohn lemma, by a continuous function to $\Bbb R$.

Recall that a topological space $X$ is regular (T3) if, given any point $x\in X$ and 
closed set $B$ in $X$ such that $x$ does not belong to $B$, they are separated by neighbourhoods,
or, equivalently, by closed neighbourhoods. 

We do use the next Lemma and give it only for context. 

\begin{lemm} 
A topological space $X$ is normal (T4) iff either of the following equivalent conditions holds:
\begin{itemize}
\item $\emptyset\lra X \rtt \verb|{a<-v->x<-w->b}-->{a<-v=x=w->b}|$
\item $\emptyset\lra X \rtt \verb|{a<-v->v'<-x->w'<-w->b}-->{a<-v=v'=x=w'=w->b}|$
\end{itemize}

A topological space $X$ is regular (T3) iff 
$\verb|{v}|\lra X \rtt \verb|{v->a<-w->b}-->{v=a=w->b}|$
\end{lemm}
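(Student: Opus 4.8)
The plan is to unwind each of the three lifting properties into an ordinary separation statement about $X$; in fact the regularity condition and the first normality condition merely restate, in the present notation, reformulations already appearing in Appendix~A, so the one genuinely new point is the equivalence of the two reformulations of normality, and that will reduce to the shrinking lemma for normal spaces.

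I would start with the regular case, which is the most transparent. A continuous map $X\lra\verb|{v=a=w->b}|$ is the same as a closed subset $F\subseteq X$ (the preimage of the closed point), and commutativity of the square with the given arrow $\verb|{v}|\lra X$, which names a point $x_0$, forces $x_0\notin F$. A diagonal lift $d\colon X\lra\verb|{v->a<-w->b}|$ is determined by the open set $V:=d^{-1}(v)$ together with the closed set $A:=d^{-1}(a)$: one checks that the other two preimages are then forced, that every further openness/closedness condition imposed by the $4$-point space holds automatically, and that the only constraints on $(V,A)$ are $V\cap A=\emptyset$, $V\cup A$ closed and $V\cup A\subseteq X\setminus F$, with $x_0\in V$ supplied by the triangle. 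Since $V\cup A$ is a closed set containing $V$, such data exist iff there is an open $V\ni x_0$ with $\overline V\subseteq X\setminus F$ (for the converse take $A:=\overline V\setminus V$). Hence the lifting property holds for all arrows $\verb|{v}|\lra X$ iff every open neighbourhood of every point contains a closed neighbourhood, which is exactly one of the standard equivalent forms of $X$ being regular.

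For the first reformulation of normality the same computation gives: a map $X\lra\verb|{a<-v=x=w->b}|$ is an ordered pair $(A,B)$ of disjoint closed sets, and a lift against $\emptyset\lra X$ amounts to disjoint open sets $V,W\subseteq X\setminus(A\cup B)$ with $A\cup V$ and $B\cup W$ open, the preimage of $x$ being the remainder of $X\setminus(A\cup B)$ and all remaining continuity conditions being automatic. Then $A\cup V$ and $B\cup W$ are disjoint open neighbourhoods of $A$ and $B$; conversely, from disjoint open neighbourhoods $V_0\supseteq A$, $W_0\supseteq B$ one produces a lift via $V:=V_0\setminus A$, $W:=W_0\setminus B$. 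So the lifting property is equivalent to normality. For the finer morphism \verb|{a<-v->v'<-x->w'<-w->b}-->{a<-v=v'=x=w'=w->b}| the analogous unwinding shows that a lift over $(A,B)$ produces open sets $A\cup V\supseteq A$ and $B\cup W\supseteq B$ whose closures lie inside $A\cup V\cup V'$ and $B\cup W\cup W'$, which are disjoint (being unions of preimages of distinct points); conversely, in a normal space the shrinking lemma yields open $P\supseteq A$, $Q\supseteq B$ with $\overline P\cap\overline Q=\emptyset$, and then $V:=P\setminus A$, $V':=\overline P\setminus P$, $W:=Q\setminus B$, $W':=\overline Q\setminus Q$, with $x$ receiving $X\setminus(\overline P\cup\overline Q)$, defines the lift. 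Thus the second condition is also equivalent to normality, whence the two conditions are equivalent.

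The work is entirely bookkeeping rather than conceptual, and that is also where the only real difficulty lies: one has to fix once and for all the dictionary ``continuous map into a finite poset $\leftrightarrow$ tuple of pairwise disjoint pieces whose prescribed unions are open or closed'', and then check patiently that each of the many constraints imposed by the $4$-, $5$- and $7$-point spaces is automatically satisfied by the sets produced from the separation axiom, so that nothing stronger than regularity, respectively normality, is actually being demanded.
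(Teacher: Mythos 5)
Your proof is correct and follows the same route as the paper's: translate a continuous map to a finite preorder into a tuple of disjoint preimages whose prescribed unions are open or closed, and match the resulting data with the separation axiom. You are in fact more complete than the paper's own (very terse) proof, which only treats the first normality morphism and the regularity square and silently omits the second bullet, where your appeal to the shrinking lemma (open neighbourhoods of $A$ and $B$ with disjoint closures) is exactly the missing ingredient.
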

\begin{proof}
The preimages of \verb$a$ and \verb$b$ are disjoint closed subsets of $X$; 
the preimages of \verb${a<-v}$ and \verb${w->b}$ are open neighbourhoods separating $A$ and $B$.

	Let us consider the only interesting case is when \verb$v$ maps to \verb$v$ in \verb|{v=a=w->b}| by 
	the top horizontal arrow.
	The image of \verb$v$ is a point $v\in X$; 
the preimage of \verb|b| is a closed subset $B\ni v$ of $X$ not containing $v$,
or, equivalently, the preimage of \verb|v=a=w| is an open neighbourhood of $v$.
The preimage $V$ of $\verb|v|\in \verb|{v->a<-w->b}|$ is an open neighbourhood of $v$ in $X$ 
disjoint from $B$.
\end{proof}

Recall that a neighbourhood of a point is a set containing an open subset containing the point.

\begin{lemm} In a compact Hausdorff space, a neighbourhood of a point contains a closed neighbourhood of the point,
	and this is expressed by the following lifting property:
	\begin{itemize}
		\item \verb|{o}--> X /_ {o->a<-u->c}-->{o=a=u->c}|
	\end{itemize}
\end{lemm}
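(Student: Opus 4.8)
The plan is to unwind the lifting property as a concrete statement about points, open sets, and closures, and then show it is exactly the well-known fact that in a compact Hausdorff space every neighbourhood contains a closed neighbourhood. First I would identify what a map into the source space $\{o\ra a\la u\ra c\}$ and a map into the target $\{o=a=u\ra c\}$ encode. A continuous map $X\lra\{o=a=u\ra c\}$ (the bottom arrow of the square) is the same as a pair consisting of a point --- no, more precisely the preimage of the open point $\{o=a=u\}$ is an open set $N$ of $X$, the preimage of the closed point $c$ is its closed complement; so such a map records an open set $N\subseteq X$. The top arrow $\{o\}\lra X$ picks a point $x\in X$, and commutativity of the square forces $x\in N$ (since $o$ maps to $o=a=u$, not to $c$). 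Thus the data of the square is precisely: a point $x$ and an open neighbourhood $N$ of $x$.

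Next I would read off what a diagonal lift $X\lra\{o\ra a\la u\ra c\}$ provides. In the preorder $\{o\ra a\la u\ra c\}$ the point $o$ is open, $c$ is closed, $u$ is open, and $a$ lies in the closure of both $o$ and $u$; the open sets are generated by $\{o\}$, $\{u\}$, $\{o,a,u\}$ (and those containing $c$, namely everything, since $c$ is the unique closed point only if $c$ is below everything --- I should double-check the specialisation order, but morphisms $o\ra a$, $u\ra a$, $u\ra c$ mean $a\in\mathrm{cl}\,o$, $a\in\mathrm{cl}\,u$, $c\in\mathrm{cl}\,u$). A lift sends $x$ to $o$ (to make the upper triangle commute), and the lower triangle forces the preimage of $\{o,a,u\}$ to be contained in $N$. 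The preimage $U$ of the open point $\{u\}$ is then an open set, and the preimage of $\{u,a,c\}$ --- which is closed, being downward closed --- is a closed set $\bar U'$ containing $U$; and one checks $\overline{U}\subseteq$ preimage of $\mathrm{cl}\{u\}$. Unwinding, the lift yields an open set $U$ with $x\in U$ and $\overline{U}\subseteq N$: exactly a closed neighbourhood of $x$ sitting inside $N$.

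Then I would invoke the classical fact: a compact Hausdorff space is normal, hence regular (T3), and in a regular space every neighbourhood of a point contains a closed neighbourhood of that point. Concretely, given $x\in N$ open, the set $X\setminus N$ is closed and misses $x$; by regularity there are disjoint opens $U\ni x$ and $W\supseteq X\setminus N$, whence $\overline{U}\subseteq X\setminus W\subseteq N$. This produces the required lift, so the lifting property holds; conversely the lifting property applied to an arbitrary $x$ and $N$ gives a closed neighbourhood inside $N$, which for compact Hausdorff spaces is automatic but in general is the statement of regularity restricted to point/open-complement pairs. I would phrase the proof as just the forward direction, since that is all the lemma claims.

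The main obstacle, and the only place demanding care, is bookkeeping the specialisation preorder of $\{o\ra a\la u\ra c\}$ correctly and checking that the preimages of the relevant up-sets and down-sets are the open set $U$ and the closed set $\overline{U}$-containing set one wants, so that commutativity of the two triangles translates precisely into $x\in U$ and $\overline{U}\subseteq N$. Once the dictionary ``map to target $=$ open neighbourhood $N\ni x$'' and ``lift $=$ open $U$ with $x\in U\subseteq\overline U\subseteq N$'' is pinned down, the topological content is the standard regularity argument and nothing more; I would keep that part to a single sentence citing that compact Hausdorff implies regular, much as the preceding lemma on T3/T4 is handled in the excerpt.
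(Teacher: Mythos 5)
Your plan coincides with the paper's: decode the square as a point $x$ together with an open neighbourhood $N$ (the preimage of $o{=}a{=}u$ under the bottom map), decode a lift as a closed neighbourhood of $x$ inside $N$, and then supply the lift from regularity of compact Hausdorff spaces. But the bookkeeping step you yourself single out as the only delicate one is carried out incorrectly, and the sentence that carries the whole dictionary is false as written. You take $U$ to be the preimage of the open point $u$ and conclude that the lift yields ``$x\in U$ and $\overline U\subseteq N$''. Neither holds: the lift sends $x$ to $o$, so $x$ is \emph{not} in the preimage of $u$; and $\operatorname{cl}\{u\}=\{u,a,c\}$ contains $c$, so its preimage contains the preimage of $c$, namely $X\setminus N$, and certainly does not sit inside $N$. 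In the space $\{o\to a\leftarrow u\to c\}$ the point $u$ is the buffer whose closure is allowed to leak into the complement of $N$; the work is done by $o$. The correct reading --- and this is exactly the paper's one-line proof --- is that the preimage of $\{o\}$ is an open set containing $x$, while the preimage of $\operatorname{cl}\{o\}=\{o,a\}$ is a closed set containing it and contained in the preimage of $\{o,a,u\}$, which equals $N$ by the lower triangle. That closed set is the closed neighbourhood.

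Once the dictionary is repaired, the regularity argument you cite does produce the lift, but you should exhibit it explicitly: given $x\in V$ open with $\overline V\subseteq N$, send $V\mapsto o$, $\overline V\setminus V\mapsto a$, $N\setminus\overline V\mapsto u$, $X\setminus N\mapsto c$, and check monotonicity of this map of preorders. You should also dispose of the top arrows $\{o\}\to\{o\to a\leftarrow u\to c\}$ that do not send $o$ to $o$, since the lifting property quantifies over all of them: for $o\mapsto u$ or $o\mapsto c$ the lift sending $N$ to $u$ and $X\setminus N$ to $c$ works, and for $o\mapsto a$ one additionally uses that points of $X$ are closed. The paper opens its proof by dismissing precisely these degenerate cases before treating the main one.
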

\begin{proof} Indeed, the lifting property holds trivially if the top horizontal arrow maps \verb|o| into \verb|u| or \verb|a|,
	so consider the case it maps to \verb|o|. 
	The preimage of \verb|o=a=u| is an open neighbourhood $U$ of $\verb|o|\in X$. The preimage of 
	\verb|o->a| by the diagonal arrow is a closed neighbourhood of $o$ contained in $U$.
\end{proof}

\begin{lemm} A map $X\lra Y$ is a closed inclusion iff it can be obtained as a basechange from \verb${c}-->{o->c}$
along some map $Y\lra\verb|{o->c}|$.
\end{lemm}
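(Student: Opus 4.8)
The plan is to identify the basechange (pullback) of $\{c\}\longrightarrow\{o\to c\}$ explicitly and then read off both implications by unwinding definitions. First I would recall that in the Sierpi\'nski space $\{o\to c\}$ the point $c$ is closed, so $\{c\}$ sits inside it as a closed subspace. For an arbitrary continuous map $f\colon Y\longrightarrow\{o\to c\}$, the pullback $Y\times_{\{o\to c\}}\{c\}$ is, on underlying sets, the preimage $f^{-1}(c)$, and, as a space, it carries the topology induced from $Y$ (a basechange of a subspace inclusion along any map is the subspace inclusion of the preimage; concretely $W\subseteq f^{-1}(c)$ is open in the pullback iff $W=f^{-1}(c)\cap V$ for some open $V\subseteq Y$). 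The projection to $Y$ is then literally the inclusion $f^{-1}(c)\hookrightarrow Y$.

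For the ``if'' direction I would note that, since $f$ is continuous and $\{c\}$ is closed in $\{o\to c\}$, the set $f^{-1}(c)$ is closed in $Y$; hence every basechange of $\{c\}\longrightarrow\{o\to c\}$ is the inclusion of a closed subset equipped with the induced topology, i.e.~a closed inclusion.

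For the ``only if'' direction, suppose $X\longrightarrow Y$ is a closed inclusion; up to homeomorphism over $Y$ it is the inclusion $A\hookrightarrow Y$ of a closed subset $A\subseteq Y$ with its subspace topology. Define the characteristic map $\chi_A\colon Y\longrightarrow\{o\to c\}$ by $\chi_A(y)=c$ for $y\in A$ and $\chi_A(y)=o$ for $y\notin A$. Its only nontrivial continuity requirement is that $\chi_A^{-1}(\{o\})=Y\setminus A$ be open, which holds precisely because $A$ is closed. By the computation of the first paragraph, the basechange of $\{c\}\longrightarrow\{o\to c\}$ along $\chi_A$ is exactly $A\hookrightarrow Y$, hence (isomorphic over $Y$ to) the given map $X\longrightarrow Y$.

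The only point requiring any care — there is no genuine obstacle here — is matching the topology on the categorical pullback with the subspace topology implicit in the phrase ``closed inclusion''. This is the standard fact that limits in the category of topological spaces are computed on underlying sets and then given the initial topology; so one should fix once and for all that ``closed inclusion'' means injective, with closed image, and with domain topology induced from the codomain. With that convention the two constructions above are mutually inverse up to the evident equivalence of maps into $Y$, and the equivalence of the two statements follows. (One may also observe this is consistent with the description of $(\{c\}\longrightarrow\{o\to c\})^{lr}$ as the class of closed subsets recorded in Appendix~A.)
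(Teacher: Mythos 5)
Your proof is correct and is exactly the standard unwinding that the paper dismisses with the single word ``Obvious'': pullbacks in $\mathrm{Top}$ are computed on underlying sets with the initial topology, so the basechange of $\{c\}\lra\{o\to c\}$ along $f$ is $f^{-1}(c)\hookrightarrow Y$ with the subspace topology, and the characteristic map of a closed set provides the converse. Nothing is missing; you have simply written out the details the author chose to omit.
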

\begin{proof} Obvious. \end{proof}

\subsubsection{Diagram chasing proof continued}
(7) Use that $Y_n$ is a closed subset of $Y_{n+1}$ by representing it as a pullback
of $y_n$ in $\{y'_{n}\ra y_n\}$. 
$$\hskip-12pt\small\hskip-31pt
\xymatrix{ 
&\{o\} \ar[rr]^{(x,y)} \ar[d]\ar[dr]\ar[drr]  &  & \{vw=o\ra c=vw'=v'w=v'w'\}  
\\ ...\longleftarrow X_{n+1}\ar@{-->}[rdd]\ar@{<-}[r] &{X_n}\ar@/^3pc/@{->}[dd]|{@\{o\}} 
& X_n\times  Y_n \ar@<0.2mm>[r]\ar[r] \ar@<0.2mm>[l]\ar[l] 	\ar[d]  
\ar[ur]& Y_n \ar[d] \ar[r] \ar@/_3pc/@{->}[dd]|{@\{o\}} & Y_{n+1}\lra...\ar@{-->}[ldd]\ar[d]
\\&    & X_{n+1}\times Y_{n+1} \ar[dd] \ar[ull] \ar[urr]  \ar[uur]&\{y_n\}\ar[r] & \{y'_n\ra y_n\}
\\& \{o=v\ra v'\} \ar@{<-}[dr] & &   \{o=w\ra w'\} \ar@{<-}[dl] 
\\& & \{v\ra v'\}\times\{w\ra w'\}
\ar[uuuur]|{@\{o\}}
}$$
(8) By normality of $Y_n$ construct $Y_n\lra\{o\ra \barw \la w \ra w'\}$, 
i.e.~an closed subset $y\in\bar W_n\subset W_n$ 
containing an open neighbourhood (the preimage of $o$) of $y$.
Here the subset $W_n$ (open in $Y_n$) is the preimage of $\{o\ra\barw \la w\}$,
and the closed subset $\bar W_n$ is the preimage of $\{o\ra\barw\}$.  
$$\hskip-42pt\small
\xymatrix{ 
&\{o\} \ar[rr]^{(x,y)} \ar[d]\ar[dr]\ar[drr]  &  & \{vw=o\ra c=vw'=v'w=v'w'\}  
\\ ...\longleftarrow X_{n+1}\ar@{-->}[rdd]\ar@{<-}[r] &{X_n}\ar@/^3pc/@{->}[dd]|{@\{o\}} 
& X_n\times  Y_n \ar@<0.2mm>[r]\ar[r] \ar@<0.2mm>[l]\ar[l] 	\ar[d]  
\ar[ur]& Y_n \ar[d] \ar[r] \ar@/_3pc/@{->}[dd]|{@\{o\}}\ar@/_5pc/[ddd] & Y_{n+1}\lra...\ar@{-->}[ldd]\ar[d]
\\&    & X_{n+1}\times Y_{n+1} \ar[dd] \ar[ull] \ar[urr]  \ar[uur]&\{y_n\}\ar[r] & \{y'_n\ra y_n\}
\\& \{o=v\ra v'\} \ar@{<-}[dr] & &   \{o=\barw = w\ra w'\} \ar@{<-}[dl] 
\\& & \{v\ra v'\}\times\{w\ra w'\}\ar[uuuur]|{@\{o\}}
 &  \{o\ra \barw \la w \ra w'\}\ar[u] 
}$$
(9) Use that $Y_n$ is a closed subset to construct a map $Y_{n+1}\lra\{y'_{n}\ra o\ra \barw \la w \ra w', y'_n\ra w\}$,
i.e.~consider $\bar W_n\subset W_n$ as  subsets of $Y_{n+1}$; here $Y_{n+1}\setminus Y_n$ is the preimage of $y'_n$.
$$\hskip-42pt\hskip-42pt\small
\xymatrix{ 
&\{o\} \ar[rr]^{(x,y)} \ar[d]\ar[dr]\ar[drr]  &  & \{vw=o\ra c=vw'=v'w=v'w'\}  
\\ ...\longleftarrow X_{n+1}\ar@{-->}[rdd]\ar@{<-}[r] &{X_n}\ar@/^3pc/@{->}[dd]|{@\{o\}} 
& X_n\times  Y_n \ar@<0.2mm>[r]\ar[r] \ar@<0.2mm>[l]\ar[l] 	\ar[d]  
\ar[ur]& Y_n \ar[d] \ar[r] \ar@/_3pc/@{->}[dd]|{@\{o\}}\ar[rdd] \ar@/_5pc/[ddd] & Y_{n+1}\lra...\ar@{-->}[ldd]\ar[d]\ar@/^3pc/[dd]
\\&    & X_{n+1}\times Y_{n+1} \ar[dd] \ar[ull] \ar[urr]  \ar[uur]&\{y_n\}\ar[r] & \{y'_n\ra y_n=w=w'=o\}
\\& \{o=v\ra v'\} \ar@{<-}[dr] & &   \{o=w=\barw\ra w'\} \ar@{<-}[dl] & \{y'_{n}\ra o\ra \barw \la w \ra w', y'_n\ra w\}\ar[u]  
\\& & \{v\ra v'\}\times\{w\ra w'\}
\ar[uuuur]|{@\{o\}} & \{o\ra \barw \la w \ra w'\}\ar[u] \ar[ur]
}$$
(10) By symmetry do the same for $X_{n+1}$, i.e.~find a closed subset $x\in\bar W_n\subset W_n$ of $X_n$ 
containing an open neighbourhood of $x$, and consider it as a subset of $X_{n+1}$.
$$\hskip-42pt\hskip-42pt\tiny
\xymatrix@C-=4.5cm{ 
&\{o\} \ar[rr]^{(x,y)} \ar[d]\ar[dr]\ar[drr]  &  & \{vw=o\ra c=vw'=v'w=v'w'\}  
\\ ...\longleftarrow X_{n+1}\ar[d]\ar@/_3pc/[dd]\ar@{-->}[rdd]\ar@{<-}[r] &{X_n}\ar@/^5pc/[ddd]\ar[d]\ar[ddl]\ar@/^3pc/@{->}[dd]|{@\{o\}} 
& X_n\times  Y_n \ar@<0.2mm>[r]\ar[r] \ar@<0.2mm>[l]\ar[l] 	\ar[d]  
\ar[ur]& Y_n \ar[d] \ar[r] \ar@/_3pc/@{->}[dd]|{@\{o\}}\ar[rdd] \ar@/_5pc/[ddd] & Y_{n+1}\lra...\ar@{-->}[ldd]\ar[d]\ar@/^3pc/[dd]
\\ \{x'_n\ra x_n=v=v'=o\}&  \{x_n\}\ar[l]  & X_{n+1}\times Y_{n+1} \ar[dd] \ar[ull] \ar[urr]  \ar[uur]&\{y_n\}\ar[r] & \{y'_n\ra y_n=w=w'=o\}
\\ \{x'_{n}\ra o\ra \barv \la v \ra v',, x'_n\ra v\}\ar[u]&   \{o=\barv=v\ra v'\} \ar@{<-}[dr] & &   \{o=\barw=w\ra w'\} \ar@{<-}[dl] & \{y'_{n}\ra o\ra \barw \la w \ra w'\}
\\ & \{o\ra \barv \la v \ra v'\}\ar[u] \ar[ul] & \{v\ra v'\}\times\{w\ra w'\}
\ar[uuuur]|{@\{o\}} 
& \{o\ra \barw \la w \ra w'\}\ar[u] \ar[ur]
}$$
(11) Finally, we constructed 
$(x,y)\in V_n\times W_n \subset \bar V_n\times \bar W_n\subset X_n\times Y_n\subset X_{n+1}\times Y_{n+1}$ and 
$\bar W_n\times \bar W_n\subset U$ where $V_n$ is open in $X_n$, and $W_n$ is open in $Y_n$, and  $\bar V_n$ is closed in both $X_n$ and $X_{n+1}$,
and so is $\bar W_n$ in $Y_n$ and $Y_{n+1}$.
$$\hskip-42pt\hskip-42pt\tiny
\xymatrix@C-=0.5cm{ 
&\{o\} \ar[rr]^{(x,y)} \ar[d]\ar[dr]\ar[drr]  &  & \{vw=o\ra c=vw'=v'w=v'w'\}  
\\ ...\longleftarrow X_{n+1}\ar@/_3pc/[dd]\ar@{-->}[rdd]\ar@{<-}[r]\ar[d] &{X_n}\ar@/^5pc/[ddd]\ar[d]\ar[ddl]\ar@/^3pc/@{->}[dd]|{@\{o\}} 
& X_n\times  Y_n \ar@<0.2mm>[r]\ar[r] \ar@<0.2mm>[l]\ar[l] 	\ar[d]  
\ar[ur]& Y_n \ar[d] \ar[r] \ar@/_3pc/@{->}[dd]|{@\{o\}}\ar[rdd] \ar@/_5pc/[ddd] & Y_{n+1}\lra...\ar@{-->}[ldd]\ar[d]\ar@/^3pc/[dd]
\\ \{x'_n\ra x_n=v=v'=o\}&  \{x_n\}\ar[l]  & X_{n+1}\times Y_{n+1} \ar[dd] \ar[ull] \ar[urr]  \ar[uur]&\{y_n\}\ar[r] & \{y'_n\ra y_n=w=w'=o\}
\\ \{x'_{n}\ra o\ra \barv \la v \ra v', x'_n\ra v\}\ar[u]&   \{o=\barv=v\ra v'\} \ar@{<-}[dr] & &   \{o=\barw=w\ra w'\} \ar@{<-}[dl] & \{y'_{n}\ra o\ra \barw \la w \ra w', y'_n\ra w\}
\\ & \{o\ra \barv \la v \ra v'\}\ar[u] \ar[ul] & \{v\ra v'\}\times\{w\ra w'\}
\ar[uuuur]|{@\{o\}} 
& \{o\ra \barw \la w \ra w'\}\ar[u] \ar[ur]
}$$
(12) Now we shall use that $X_{n+1}\times Y_{n+1} \lra Y_{n+1}$ is closed. 
We have a closed subset $\bar W_n \subset X_{n+1}$, 
and an open subset of $U\subset X_{n+1}\times Y_{n+1}$, and we know that $\bar V_n \times \bar W_n \subset U$. 
Using that the projection $X_{n+1}\times Y_{n+1} \lra Y_{n+1}$ is closed, we find 
an open subset $\bar W_n \subset W_{n+1}\subset Y_{n+1}$ (the preimage of \verb|o| in 
$Y_{n+1}\lra \{o=*o = x_n'c=vc=v'c\ra oc=\barv c\}$)
such that $\bar V_n \times \bar W_{n+1} \subset U$.
In this diagram, we use yet another extension of our  notation: \verb|@{o}| above the arrow
means (in this picture) that it relates only to the triangle above and not below.
$$\hskip-42pt\hskip-42pt\hskip-21pt\tiny
\xymatrix@C-=0.5cm{ 
&\{o\} \ar[rr]^{(x,y)} \ar[d]\ar[dr]\ar[drr]  &  & \{vw=o\ra c=vw'=v'w=v'w'\}  
\\ ...\longleftarrow X_{n+1}\ar@/_3pc/[dd]\ar@{-->}[rdd]\ar@{<-}[r]\ar[d] &{X_n}\ar@/^5pc/[ddd]\ar[d]\ar[ddl]\ar@/^3pc/@{->}[dd]|{@\{o\}} 
& X_n\times  Y_n \ar@<0.2mm>[r]\ar[r] \ar@<0.2mm>[l]\ar[l] 	\ar[d]  
\ar[ur]& Y_n \ar[d] \ar[r] \ar@/_3pc/@{->}[dd]|{@\{o\}}\ar[rdd] \ar@/_5pc/[ddd] & Y_{n+1}\lra...\ar@{-->}[ldd]\ar[d]\ar@/^3pc/[dd]
\\ \{x'_n\ra x_n=v=v'=o\}&  \{x_n\}\ar[l]  & X_{n+1}\times Y_{n+1} \ar[dd] \ar[ull] \ar[urr]  \ar[uur]&\{y_n\}\ar[r] & \{y'_n\ra y_n=w=w'=o\}
\\ \{x'_{n}\ra o\ra \barv \la v \ra v', x'_n\ra v\}\ar[u]&   \{o=\barv=v\ra v'\} \ar@{<-}[dr] & &   \{o=\barw=w\ra w'\} \ar@{<-}[dl] & \{y'_{n}\ra o\ra \barw \la w \ra w', y'_n\ra w\}
\\ & \{o\ra \barv \la v \ra v'\}\ar[u] \ar[ul] & \{v\ra v'\}\times\{w\ra w'\}
\ar[uuuur]|{@\{o\}} 
& \{o\ra \barw \la w \ra w'\}\ar[u] \ar[ur]
\\
\\ &\{x'_{n}\ra o\ra \barv \la v \ra v', x'_n\ra v\}\times \{o\ra c\}
\ar[uuul]\ar[uuuuuurr]\ar@/^2pc/@{<-}[uuuur]\ar[d] & & \{o=\barw \leftrightarrow w=w'=y'\}\ar@{<-}[uuur]
\\ &
\{o=*o = x_n'c=vc=v'c\ra oc=\barv c\}  \ar@{<-}[u] \ar[urr]|{@\{o\}} \ar@/_3pc/@{<--}[uuuuuurrr]^{@\{o\}} 
}$$
(13) Unfortunately, the last step is somewhat vague, as it requires a careful handling of the inductive step, 
which we are not able to do. Neither do we carefully specify the commutativity conditions, unfortunately.

Finally, by normality of $Y_{n+1}$ add an arrow $\{o=*o = x_n'c=vc=v'c\ra oc=\barv c\}  \xra{@\{o\}} \{o\ra \barw \la w \ra w'\}$.  We now see that our calculation shows how to add an arrow  
$Y_{n+1} \xra{@\{o\}} \{o\ra \barw \la w \ra w'\}$ given an arrow 
$Y_{n} \xra{@\{o\}} \{o\ra \barw \la w \ra w'\}$, fitting the same diagram. 
By symmetry we may do the same for $X_n$ and $X_{n+1}$, and this would complete the inductive step, and 
thereby the argument. Note that we modified the inductive step assumption from (7). 
$$\hskip-42pt\hskip-42pt\hskip-21pt\tiny
\xymatrix@C=0.5cm{ 
&\{o\} \ar[rr]^{(x,y)} \ar[d]\ar[dr]\ar[drr]  &  & \{vw=o\ra c=vw'=v'w=v'w'\}  
\\ ...\longleftarrow X_{n+1}\ar@/_3pc/[dd]\ar@{-->}[rdd]\ar@{<-}[r]\ar[d] &{X_n}\ar@/^5pc/[ddd]\ar[d]\ar[ddl]\ar@/^3pc/@{->}[dd]|{@\{o\}} 
& X_n\times  Y_n \ar@<0.2mm>[r]\ar[r] \ar@<0.2mm>[l]\ar[l] 	\ar[d]  
\ar[ur]& Y_n \ar[d] \ar[r] \ar@/_3pc/@{->}[dd]|{@\{o\}}\ar[rdd] \ar@/_5pc/[ddd] & Y_{n+1}\lra...\ar@{-->}[ldd]\ar[d]\ar@/^3pc/[dd]
\\ \{x'_n\ra x_n=v=v'=o\}&  \{x_n\}\ar[l]  & X_{n+1}\times Y_{n+1} \ar[dd] \ar[ull] \ar[urr]  \ar[uur]&\{y_n\}\ar[r] & \{y'_n\ra y_n=w=w'=o\}
\\ \{x'_{n}\ra o\ra \barv \la v \ra v', x'_n\ra v\}\ar[u]&   \{o=\barv=v\ra v'\} \ar@{<-}[dr] & &   \{o=\barw=w\ra w'\} \ar@{<-}[dl] & \{y'_{n}\ra o\ra \barw \la w \ra w', y'_n\ra w\}
\\ & \{o\ra \barv \la v \ra v'\}\ar[u] \ar[ul] & \{v\ra v'\}\times\{w\ra w'\}
\ar[uuuur]|{@\{o\}} 
& \{o\ra \barw \la w \ra w'\}\ar[u] \ar[ur]
\\
\\ &\{x'_{n}\ra o\ra \barv \la v \ra v', x'_n\ra v\}\times \{o\ra c\}
\ar[uuul]\ar[uuuuuurr]\ar@/^2pc/@{<-}[uuuur]\ar[d] & & \{o=\barw \leftrightarrow w=w'=y'\}\ar@{<-}[uuur]
\\ &
\{o=*o = x_n'c=vc=v'c\ra oc=\barv c\}  \ar@{<-}[u] \ar[urr]|{@\{o\}} \ar@{<-}[uuurr]|{o=\barw =w} \ar@/_3pc/@{<--}[uuuuuurrr]^{@\{o\}} 
}$$

\end{document}